\newcommand{\noun}[1]{\textsc{#1}}
\providecommand{\tabularnewline}{\\}
\numberwithin{equation}{section}
\numberwithin{figure}{section}
\theoremstyle{plain}
\newtheorem{thm}{\protect\theoremname}[section]
\theoremstyle{plain}
\newtheorem{prop}[thm]{\protect\propositionname}
\theoremstyle{plain}
\newtheorem{lem}[thm]{\protect\lemmaname}
\theoremstyle{plain}
\newtheorem{cor}[thm]{\protect\corollaryname}
\theoremstyle{remark}
\newtheorem{rem}[thm]{\protect\remarkname}
\providecommand{\corollaryname}{Corollary}
\providecommand{\lemmaname}{Lemma}
\providecommand{\propositionname}{Proposition}
\providecommand{\remarkname}{Remark}
\providecommand{\theoremname}{Theorem}
\begin{document}
\title{\noun{Jordan Algebras over icosahedral cut-and-project quasicrystals }}
\author{Daniele Corradetti}
\address{Departamento de Matematica\\
 Universidade do Algarve\\
 Campus de Gambelas\\
 Faro, PT}
\email{d.corradetti@gmail.com}
\author{David Chester}
\address{Quantum Gravity Research\\
 Los Angeles, California \\
 CA 90290, USA\\
}
\email{DavidC@QuantumGravityResearch.org}
\author{Raymond Aschheim}
\address{Quantum Gravity Research\\
 Los Angeles, California \\
 CA 90290, USA\\
}
\email{Raymond@QuantumGravityResearch.org}
\author{Klee Irwin}
\address{Quantum Gravity Research\\
 Los Angeles, California \\
 CA 90290, USA\\
}
\email{Klee@QuantumGravityResearch.org}
\begin{abstract}
In this paper present a general setting for aperiodic Jordan algebras
arising from icosahedral quasicrystals that are obtainable as model
sets of a cut-and-project scheme with a convex acceptance window.
In these hypothesis, we show the existence of an aperiodic Jordan
algebra structure whose generators are in one-to-one correspondence
with elements of the quasicrystal. Moreover, if the acceptance window
enjoys a non-crystallographic symmetry arising from $H_{2},H_{3}$
or $H_{4}$ then the resulting Jordan algebra enjoys the same $H_{2},H_{3}$
or $H_{4}$ symmetry. Finally, we present as special cases some examples
of Jordan algebras over a Fibonacci-chain quasicrystal, a Penrose
tiling, and the Elser-Sloane quasicrystal. 
\end{abstract}

\maketitle
\noindent \textbf{\small{}MSC2020}{\small{}: 52C23, 17B65, 17C50 }{\small\par}

\section{\noun{introduction and motivation}}

Crystallographic Coxeter groups are an indispensable tool in classification
of simple Lie algebras and thus play a prominent role in mathematical
physics. The Chevalley-Serre theorem \cite{Se,Kac} notoriously prevents
the existence of finite-dimensional Lie algebras arising from non-crystallographic
groups such as $H_{2},H_{3}$ and $H_{4}$. Therefore, even though
numerous attempts have been made to encounter algebraic structures
(e.g. \cite{Patera Affine extension,Patera98,Tw99a,Dechant Kac Moody extension})
and integrable systems (e.g. \cite{Shc,FrKo05,Tw99a,Twarock2000})
based non-crystallographic groups, those are far less used and known
in their physical and algebraic applications. Nevertheless, the interest
on non-crystallographic groups has incredibily grown since Setchman
in 1982 found a metallic alloy with icosahedral point group symmetry
\cite{ShBGC}, that was later defined by Levine and Steinhard \cite{LeSt}
as \emph{quasicrystal}, in the sense of a crystal with quasi-periodic
translational symmetry. Indeed, a very general mathematical set-up
for the modelization of such type of quasicrystals was introduced
by Moody and Patera in \cite{Icosians}, making use of an interesting
embedding of the non crystallographic group $H_{4}$ into $E_{8}$
noticed by Elser and Sloane in \cite{Elser Sloane} and then developed
noticed by Shcherbak in \cite{Shc}. In their work, Moody and Patera
showed that a large class of icosahedral quasicrystals are obtainable
as a cut and project scheme on the lattice of icosians or, equivalently,
over an $E_{8}$-lattice. Here, we will have their cut-and-project
scheme as a starting point, focusing on a special class of quasicrystals
arising from an acceptance window that must be a convex set. In this
framework, we show in a constructive way the existence of a Jordan
algebra structure whose generator are in one-to-one correspondence
with elements of the quasicrystal. These Jordan algebras thus fits
into the class of \emph{aperiodic algebras} that were first introduced
by Patera et al. in \cite{Patera98} and later on studied by Twarock\cite{Twarock2000}
and Mazochurck \cite{Ma02}, even though their analysis was limited
to aperiodic Lie algebras and never extended to Jordan algebras. Moreover,
due to the $H_{4}$-symmetry of icosians, if the acceptance window
enjoys an $H_{1}\cong A_{1},H_{2},H_{3}$ or $H_{4}$ symmetry, then
the resulting Jordan algebra enjoys the same $H_{1}\cong A_{1},H_{2},H_{3}$
or $H_{4}$ symmetry. In fact, this class of Jordan algebras are an
algebraic structure naturally endowed with these non-crystallographic
symmetries.

The present work is organized as follows. In section 2 we review basic
definitions in order to fix notations and present crystallographic
groups $H_{2},H_{3}$ and $H_{4}.$ In section 3 we present a reformulation
of the cut-and-project scheme in \cite{Icosians,Elser Sloane} for
both an icosian lattice and an $E_{8}$-lattice. Section 4 is entirely
devoted to a binary operation, called \emph{quasiaddition}, first
introduced in \cite{Berman Quasiaddition} and which will be crucial
in the definition of the aperiodic Jordan algebra. Section 5 deals
with aperiodic algebras, reviewing some known aperiodic algebras from
\cite{Patera98,Twarock2000,Ma02}, then we present the definition
of the aperiodic Jordan algebra $\mathfrak{J}\left(\Xi\right)$ defined
for every quasicrystal $\Xi$ from convex cut-and-project schemes,
and we investigate basic properties of such class of algebras. Finally,
to show that the selected class of quasicrystals is not only broad,
but also highly significant, in section 6 we present some well-known
examples of icosahedral quasicrystals, and thus of aperiodic Jordan
algebras, that fall in this category: the \emph{Fibonacci-chain quasicrystal},
a \emph{Penrose tiling}, a $\mathbb{Z}_{6}$-\emph{quasicrystal} and
the \emph{Elser-Sloane quasicrystal}. 

In the whole paper symbol $\tau$ indicates the \emph{golden mean},
i.e. $\tau=\frac{1}{2}\left(1+\sqrt{5}\right)\approx1.618$ and while,
to avoid any ambiguity, the other root of the equation $x^{2}+x+1$
is referred as $1-\tau$ or $-\tau^{-1}$, i.e. $1-\tau=\frac{1}{2}\left(1-\sqrt{5}\right)\approx-0.618$.

.

\section{\noun{icosians and non-crystallographic root system}s }

The cut-and-project scheme that we will use in later section is a
reformulation of that proposed by Moody and Patera in \cite{Icosians}
and that makes use of icosians developing Elser-Sloane construction
in \cite{Elser Sloane}. The goal of this section is to review all
concepts needed to fully express such mathematical set-up. \emph{Icosians}
are a special set of quaternions that are vertices of a convex regular
4-polytope $\left\{ 3,3,5\right\} $, i.e. a 600-cell, which forms
a group under quaternionic multiplication and, also, a non-crystallographic
root system of type $H_{4}$ with the Euclidean norm. In fact, it
can be easily shown \cite{Dechant21} that the group arising from
icosians is the binary icosahedral group $2I$, the order-120 dicyclic
group, which is double cover of the rotational symmetry group of the
icosahedron. The $H_{4}$-symmetry of the icosian group allows the
Jordan algebras that we will define in sec. 5 to be $H_{4}$-symmetric,
and it is of paramount importance in order to model icosahedral quasicrystals. 

\subsection{\noun{\label{subsec:Euclidean-spaces-and}Euclidean spaces and root
systems}}

Let $\mathbb{E}^{n}$ be an Euclidean space, i.e. a vector space over
the real numbers $\mathbb{R}$ endowed with a positive defined symmetric
bilinear form $\left\langle \cdot\mid\cdot\right\rangle $ called
\emph{inner product}. Then the \emph{lenght} of a vector $v\in\mathbb{E}^{n}$
is given by $\left\Vert v\right\Vert =\sqrt{\left\langle v\mid v\right\rangle },$
and the \emph{angle} $\theta\left(v,w\right)$ of two vectors $v,w\in\mathbb{E}^{n}$
is defined as 
\begin{equation}
\cos\theta\left(v,w\right)=\frac{\left\langle v\mid w\right\rangle }{\left\Vert v\right\Vert \left\Vert w\right\Vert }.
\end{equation}
A \emph{reflection} in the Euclidean space $\mathbb{E}^{n}$ is a
linear transformation $r_{\alpha}$ that fixes all vectors orthogonal
to the vector $\alpha$, i.e.
\begin{equation}
r_{\alpha}\left(v\right)=v-\frac{2\left\langle v\mid\alpha\right\rangle }{\left\langle \alpha\mid\alpha\right\rangle }\alpha,\label{eq:definizione reflection}
\end{equation}
for every $\alpha,v\in\mathbb{E}^{n}$ . Obviously, the composition
of two reflections is still a reflection. Moreover, the inverse of
any reflection is the reflection itself, i.e. $r_{\alpha}^{-1}=r_{\alpha}$,
and, finally, identity itself is a reflection once we set $\alpha=0$.
As a consequence, the reflections of an Euclidean space form a group.
Even more, a straightforward computation shows that $\left\langle r_{\alpha}\left(v\right)\mid r_{\alpha}\left(w\right)\right\rangle =\left\langle v\mid w\right\rangle $
for every $v,w\in\mathbb{E}^{n}$, so that every reflection $r_{\alpha}$
is an isometry. A group generated by reflections (\ref{eq:definizione reflection})
is called a \emph{reflection group.}

Let $\mathbb{E}^{n}$ be an Euclidean space. Then a subset $\Phi$
of vectors in $\mathbb{E}^{n}$ is called a \emph{root system}, if
it contains the opposite of every elements and it is invariant to
every reflection generated by its elements, i.e. 
\begin{equation}
\pm\alpha\in\Phi\,\,\text{ and }\,r_{\alpha}\left(\Phi\right)=\Phi,\label{eq:defRootSystems}
\end{equation}
for every $\alpha\in\Phi$. If the subset $\Phi$ is a root system,
then all vectors $\alpha$ belonging to $\Phi$ are called \emph{roots}
and the dimension $n$ of the embedding space $n$ is called the\emph{
rank} of the system. Finally, for every root system $\Phi$, we call
the \emph{root lattice} of $\Phi$ in $\mathbb{E}^{n}$ as the $\mathbb{Z}$-span
of $\Phi$, i.e. 
\begin{equation}
\Lambda\left(\Phi\right)=\left\{ \beta\in\mathbb{E}^{n}:\beta=n\alpha,\alpha\in\Phi,n\in\mathbb{Z}\right\} .
\end{equation}
It is important to stress out that, by definition (\ref{eq:defRootSystems}),
root systems and reflection groups are linked one another, so that
any root system give rise to the reflection group that leave its roots
invariant and the converse is also true. Finally, all root systems
are notoriously classified in the following families
\begin{itemize}
\item four one-parameter families of type $A_{n},B_{n},C_{n},D_{n}$ of
rank $n\geq1$;
\item a one-parameter family of type $I_{2}\left(m\right)$ of rank $2$
with $m\in\mathbb{N}$. Special cases $I_{2}\left(3\right),I_{2}\left(4\right),I_{2}\left(5\right)$
and $I_{2}\left(6\right)$ are historically labeled as $A_{2},B_{2},H_{2}$
and $G_{2}$ respectively;
\item and six exceptional systems of type $E_{6},E_{7},E_{8},F_{4},H_{3}$
and $H_{4}$.
\end{itemize}
A special class of root systems, called crystallographic systems,
found a major role in Lie theory. Indeed, let us consider the quantities
generated by $\left\langle \beta\mid\alpha\right\rangle $ and $\left\langle \alpha\mid\alpha\right\rangle $
where $\alpha,\beta$ are roots of the same system $\Phi$. If the
ratio between the two roots is always an integer, i.e. 
\begin{equation}
\frac{\left\langle \beta\mid\alpha\right\rangle }{\left\langle \alpha\mid\alpha\right\rangle }\in\mathbb{Z},
\end{equation}
for every $\alpha,\beta\in\Phi$, then the root system is called \emph{crystallographic},
while is called \emph{non crystallographic} otherwise. As famously
proved by Serre \cite{Se}, any crystallographic system allows the
definition of a finite dimensional Lie algebra with Cartan decomposition
related to the root system. On the other hand, non-crystallographic
root system do not satisfy Serre relations and thus do not give rise
to finite dimensional Lie algebra \cite{Kac}. 

\subsection{\noun{\label{subsec:non-crystallographic-root-system}non-crystallographic
root systems}}

A concrete example of a type-$H_{4}$ root system is $\triangle_{4}\subset\mathbb{R}^{4}$
given by the following 120 vertices of a 600-cell $C_{600}$, i.e.

\begin{equation}
\triangle_{4}=\left\{ \begin{array}{cc}
\left(\pm1,0,0,0\right),\frac{1}{2}\left(\pm1,\pm1,\pm1,\pm1\right) & \text{\,\,and all permutation}\\
\frac{1}{2}\left(0,\pm1,\pm\frac{1}{\tau},\pm\tau\right) & \text{\,\,and all even permutation}
\end{array}\right\} .\label{eq:H4definition}
\end{equation}
A subsystem of $\triangle_{4}\subset\mathbb{R}^{4}$ is given by $\triangle_{3}\subset\mathbb{R}^{3}$,
which is a root system of type $H_{3}$, defined by the following
30 vertices of an icosahedron, i.e.

\begin{equation}
\triangle_{3}=\left\{ \begin{array}{cc}
\left(\pm1,0,0\right) & \text{\,\,and all permutation}\\
\frac{1}{2}\left(\pm1,\pm\frac{1}{\tau},\pm\tau\right) & \text{\,\,and all even permutation}
\end{array}\right\} .
\end{equation}
 Finally, $\triangle_{3}\subset\mathbb{R}^{3}$ contains a type-$H_{2}$
root system $\triangle_{2}\subset\mathbb{R}^{2}$ given by the 10
vertices of a decagon, i.e.

\begin{equation}
\triangle_{2}=\left\{ \begin{array}{cc}
\left(\pm1,0\right)\\
\frac{1}{2}\left(\pm1,\pm\tau\right) & \text{\,\,and all permutation}
\end{array}\right\} ,
\end{equation}
so that we have the following chain of root systems $\triangle_{2}\subset\triangle_{3}\subset\triangle_{4}$
that are of type $H_{2},H_{3}$ and $H_{4}$ respectively.

\subsection{\noun{non-crystallographic coxeter groups}}

In order to study reflection groups and root systems in an efficient
way, it is a common practice to define Coxeter groups which are abstract
presentations of reflection groups. Let a \emph{Coxeter matrix} $M$
be a symmetric matrix of finite dimension $n$ with integer off diagonal
entries $\geq2$ and diagonal entries equal to $1$, i.e. $m_{ij}\in\mathbb{N}$,
$m_{ij}=m_{ji},\,\,$$m_{ii}=1$ and $m_{ij}\geq2$ if $i\neq j$.
Then a\emph{ Coxeter group} is an abstract group with presentation
\begin{equation}
\left\langle R_{1},...,R_{n}\mid\left(R_{i}R_{j}\right)^{m_{ij}}=1\right\rangle .
\end{equation}
As Coxeter himself proved in \cite{Coxeter 1934}, every reflection
group is a Coxeter group and, even more, every finite Coxeter group
admits a faithful representation as a finite reflection group of some
Euclidean space. In case of the non-crystallographic root systems
of type $H_{2},H_{3}$ and $H_{4}$, we have a presentation of the
respective Coxeter groups given by $\left\langle R_{1},...,R_{n}\mid\left(R_{i}R_{j}\right)^{m_{ij}}=1\right\rangle $,
with $n$ equal to $2,3$ and $4$ respectively, and with coefficients
$m_{ij}$ of the Coxeter matrices given by the following
\begin{equation}
\left(H_{2}\right)_{ij}=\left(\begin{array}{cc}
1 & 5\\
5 & 1
\end{array}\right),\left(H_{3}\right)_{ij}=\left(\begin{array}{ccc}
1 & 3 & 2\\
3 & 1 & 5\\
2 & 5 & 1
\end{array}\right),\left(H_{4}\right)_{ij}=\left(\begin{array}{cccc}
1 & 3 & 2 & 2\\
3 & 1 & 3 & 2\\
2 & 3 & 1 & 5\\
2 & 2 & 5 & 1
\end{array}\right).
\end{equation}
To give some concrete realisations of such groups, the Coxeter group
$H_{2}$ of order 10 can be realised as the symmetry group of a decagon,
i.e. the reflection group that leaves invariant the roots of the system
$\triangle_{2}$; the group $H_{3}$ of order 120 can be realised
as the full symmetry group of an icosahedron, i.e. the reflection
group that leaves invariant the roots of the system $\triangle_{3}$;
finally, the group $H_{4}$ of order 14400 can be realised as the
symmetry group of a 600-cell, i.e. the reflection group that leaves
invariant the roots of the system $\triangle_{4}$.

\subsection{\noun{the icosian ring\label{subsec:the-icosian-ring}}}

\begin{table}
\begin{centering}
\begin{tabular}{|c|c|c|c|}
\hline 
Root of $\triangle_{4}$ & Icosian in $\mathbb{H}$ & Permutation in $A_{5}$ & Isometry of $\mathbb{R}^{3}$\tabularnewline
\hline 
\hline 
$\pm\left(0,1,0,0\right)$ & $\pm\text{i}$ & $\left(2,3\right)\left(4,5\right)$ & $\frac{1}{2}\left(\begin{array}{ccc}
-1 & \frac{1}{\tau} & -\tau\\
\frac{1}{\tau} & -\tau & -1\\
-\tau & -1 & \frac{1}{\tau}
\end{array}\right)$\tabularnewline
\hline 
$\pm\left(0,0,1,0\right)$ & $\pm\text{j}$ & $\left(2,4\right)\left(5,3\right)$ & $\frac{1}{2}\left(\begin{array}{ccc}
-\tau & 1 & -\frac{1}{\tau}\\
1 & -\frac{1}{\tau} & \tau\\
-\frac{1}{\tau} & \tau & -1
\end{array}\right)$\tabularnewline
\hline 
$\pm\left(0,0,0,1\right)$ & $\pm\text{k}$ & $\left(2,5\right)\left(3,4\right)$ & $\frac{1}{2}\left(\begin{array}{ccc}
-\frac{1}{\tau} & -\tau & 1\\
-\tau & -1 & -\frac{1}{\tau}\\
1 & -\frac{1}{\tau} & -\tau
\end{array}\right)$\tabularnewline
\hline 
$\pm\frac{1}{2}\left(-1,1,1,1\right)$ & $\pm\frac{1}{2}\left(1+\text{i}+\text{j}+\text{k}\right)$ & $\left(3,4,5\right)$ & $\frac{1}{2}\left(\begin{array}{ccc}
-1 & \frac{1}{\tau} & \tau\\
-\frac{1}{\tau} & \tau & -1\\
-\tau & -1 & -\frac{1}{\tau}
\end{array}\right)$\tabularnewline
\hline 
$\pm\frac{1}{2}\left(0,1,-\frac{1}{\tau},\tau\right)$ & $\pm\frac{1}{2}\left(\text{i}-\frac{1}{\tau}\text{j}+\tau\text{k}\right)$ & $\left(1,3\right)\left(4,5\right)$ & $\frac{1}{2}\left(\begin{array}{ccc}
-1 & -\frac{1}{\tau} & \tau\\
-\frac{1}{\tau} & -\tau & -1\\
\tau & -1 & \frac{1}{\tau}
\end{array}\right)$\tabularnewline
\hline 
\end{tabular}
\par\end{centering}
\caption{\label{tab:Correspondence-between-root}Correspondence between root
of the system $\triangle_{4}$ of type $H_{4}$, element of the icosian
group $\mathscr{I}$, elements of the alternating group $A_{5}$ and
the icosahedral symmetries in $\mathbb{R}^{3}$ of which the icosian
group $\mathscr{I}$ is a double cover.}
\end{table}

In section \ref{subsec:non-crystallographic-root-system} we showed
concrete examples of root systems of type of type $H_{2},H_{3}$ and
$H_{4}$. Here, we show a notable quaternionic realisation of the
root system $\triangle_{4}$ of type $H_{4}$, and thus of its subsystems
$\triangle_{3}$ and $\triangle_{2}$ of type $H_{3}$ and $H_{2}$
respectively. With this in mind we identify $\mathbb{R}^{4}$ with
quaternions $\mathbb{H}$ through the canonical identification $\imath$
from $\mathbb{R}^{4}$ to $\mathbb{H}$ given by
\begin{equation}
\imath\left(a,b,c,d\right)\longrightarrow a+b\text{i}+c\text{j}+d\text{k},\label{eq:canonical R4 to H}
\end{equation}
and define the subset of \emph{icosians} $\mathscr{I}\subset\mathbb{H}$
as the quaternions that are image of $\imath\left(\triangle_{4}\right)$,
where elements of $\triangle_{4}\subset\mathbb{R}^{4}$ are explicitly
defined in (\ref{eq:H4definition}). Obviously $\mathscr{I}$ is root
system of type $H_{4}$. Notice that the quaternionic units $1,\text{i},\text{j},\text{k}$
belong to $\mathscr{I}\subset\mathbb{H}$ since they are image of
even permutations of $\left(1,0,0,0\right)$ that belong to $\triangle_{4}$. 

Moreover, it is easy to see that the 120 elements of $\mathscr{I}$
form a group under quaternionic multiplication. This group is known
as the \emph{icosian group} \cite{Conway Sloane} and is isomorphic
to the binary icosahedral group. The reason behind the name it is
due to Hamilton \cite{HamiltonRoot} and lies in its relationship
with the icosahedral rotational group $A_{5}$ of order 60 of which
$\mathscr{I}$ is the double cover. Indeed, the homomorphism from
$\mathscr{I\subset\mathbb{H}}$ to $A_{5}$ given by
\begin{align}
\begin{array}{ccc}
\left(0,1,0,0\right) & \longrightarrow & \left(2,3\right)\left(4,5\right),\\
\left(0,0,1,0\right) & \longrightarrow & \left(2,4\right)\left(5,3\right),\\
\left(0,0,0,1\right) & \longrightarrow & \left(2,5\right)\left(3,4\right),\\
\frac{1}{2}\left(-1,1,1,1\right) & \longrightarrow & \left(3,4,5\right),\\
\frac{1}{2}\left(0,1,-\frac{1}{\tau},\tau\right) & \longrightarrow & \left(1,3\right)\left(4,5\right),
\end{array}
\end{align}
it is well defined and has kernel $\left\{ 1,-1\right\} $. Table
\ref{tab:Correspondence-between-root} shows the correspondence between
roots of $\triangle_{4}$, elements of the icosians group $\mathscr{I}$,
elements of the alternating group $A_{5}$ and, elements of the icosahedral
group as isometries in $\mathbb{R}^{3}$. 

From the icosian group, we define the \emph{icosian ring} $\mathbb{I}$
as the $\mathbb{Z}-$span, i.e. all finite sums of the icosian group
$\mathscr{I}$. Obviously, the icosian ring $\mathbb{I}$ is a subset
of the algebra of quaternions $\mathbb{H}$, and from such algebra
inherits a conjugation and a norm, that we will call \emph{quaternionic
norm} $n$, given by 
\begin{equation}
n\left(x\right)=\left\langle x,x\right\rangle =x\overline{x},
\end{equation}
for every $x\in\mathbb{I}$. It is worth noting that elements of the
icosians group $\mathscr{I}$ are icosians with unitary norm, i.e.
\begin{equation}
\mathscr{I}=\left\{ x\in\mathbb{I}:n\left(x\right)=1\right\} ,
\end{equation}
so that icosians $\mathbb{I}$ of unitary quaternionic norm form a
root system of type $H_{4}$. In a next section we will see that this
is not the only norm we set over the ring of icosians and that ring
of icosians $\mathbb{I}$ endowed with the \emph{Euclidean norm} $n_{\mathbb{E}}$
that we will later define is isomorphic to an $E_{8}$-lattice \cite{Conway Sloane}.

Now, consider the map $*$ on the Dirichlet ring $\mathbb{Z}\left[\tau\right]$
that sends $\tau$ in $1-\tau$, i.e. defined by
\begin{equation}
\left(a+\tau b\right)^{*}=\left(a+b\right)-\tau b,\label{eq:starmap}
\end{equation}
with $a,b\in\mathbb{Z}$. Clearly, the map $*$ can be extended to
the whole icosian ring $\mathbb{I}$ and it will be of paramount importance
in defining icosahedral quasicrystals in the next section.

Finally, an useful way of working with icosians is through their $\mathbb{Z}^{8}$
decomposition, which is the following
\begin{equation}
x=\left(a+\tau b\right)+\left(c+\tau d\right)\text{i}+\left(e+\tau f\right)\text{j}+\left(g+\tau h\right)\text{k},\label{eq:Z8Decomp}
\end{equation}
where $a,b,c,d,e,f,g\in\mathbb{Z}$. Thus, from now on we will use
ambivalently what we will call\emph{ integral coordinates }of a generic
icosian $x\in\mathbb{I}$, i.e. $x=\left(a,b,c,d,e,f,g,h\right)$,
or the quaternionic notation depending on the convenience. Moreover,
it is straightforward to note that 
\begin{equation}
x^{*}=\left(a+b,-b,c+d,-d,e+f,-f,g+h,-h\right),\label{eq:starmap-Z8}
\end{equation}
which gives us an explicit formula for the star map in integral coordinates.

\section{\noun{icosahedral quasicrystals as model sets}}

It is well-known that a general mathematical setup for cut-and-project
quasicrystals is given by \emph{model sets} that arise from cut-and-project
schemes \cite{Mo00}. More specifically, a broad class of icosahedral
quasicrystals, which is the one of our interest, is linked to cut-and-project
schemes over an icosian lattice (e.g. see \cite{Icosians,Mo00}).
In this section we present the notion of \emph{convex cut-and-project
scheme}, introducing a scheme inspired by \cite[sec. 4.1]{Mo00} and
that make use of an icosian lattice. Moreover, we present an equivalent
scheme that make use of an $E_{8}$-lattice. Since for the definition
of the aperiodic Jordan algebra the acceptance window of the scheme
must be convex, we will restrict ourself to this condition. It is
worth noting that the setting of the cut-and-project scheme through
the use of the ``icosian lattice'' or its equivalent $E_{8}$-lattice
is endowed with an $H_{4}$ symmetry that will be useful for Theorem
\ref{thm:Transparency} on the non-crystallographic symmetries of
the Jordan algebra.

\subsection{\noun{model sets and cut-and-project schemes}}

A \emph{cut-and-project scheme} is a lattice $\Lambda$ in an \emph{embedding
space} $\mathbb{R}^{d_{1}}\times G$, where $\mathbb{R}^{d_{1}}$
is a real euclidean space called the \emph{physical space} and, $G$
is a locally compact abelian group that, for our purposes, will simply
be $\mathbb{R}^{d_{2}}$ and that is called the \emph{inner space}.
Along with the previous embedding, a cut-and-project scheme is endowed
with two projections $\pi_{1}$ and $\pi_{2}$ that have image into
$\mathbb{R}^{d_{1}}$ and $\mathbb{R}^{d_{2}}$ respectively. In brief,
a cut-and-project scheme is given by the following collection of spaces
and mappings
\begin{equation}
\begin{array}{ccccc}
\Xi &  & \Lambda &  & \Omega\\
\cap &  & \cap &  & \cap\\
\mathbb{R}^{d_{1}} & \overset{\pi_{1}}{\longleftarrow} & \mathbb{R}^{d_{1}}\times\mathbb{R}^{d_{2}} & \overset{\pi_{2}}{\longrightarrow} & \mathbb{R}^{d_{2}}
\end{array}\label{eq:cp schem-gen}
\end{equation}
such that the restriction $\pi_{1}\mid_{\Lambda}$ is injective in
$\mathbb{R}^{d}$ and the image $\pi_{2}\left(\Lambda\right)$ is
dense in $\mathbb{R}^{d_{2}}$. Given a cut-and-project scheme and
a set $\Omega$ called \emph{acceptance window}, such that the closure
of the interior $\overline{\text{int}\left(\Omega\right)}$ is compact,
then a \emph{model set} $\Xi$ is defined as the points in physical
space that are image of points that once are projected in the inner
space belong to the window $\Omega$, i.e.
\begin{equation}
\Xi=\left\{ \pi_{1}\left(x\right)\in\mathbb{R}^{d_{1}}:x\in\Lambda,\pi_{2}\left(x\right)\in\Omega\right\} .
\end{equation}
In practice, it is often useful to define the model set $\Xi$ through
a \emph{star map }$*$ that goes directly from $\mathbb{R}^{d_{1}}$
to $\mathbb{R}^{d_{2}}$, i.e. 
\begin{equation}
x^{*}=\pi_{2}\left(\pi_{1}\mid_{\Lambda}^{-1}\left(x\right)\right),
\end{equation}
so that the model set is then given by points in the physical space
that are sent by the star map into the acceptance window, i.e. $\Xi=\left\{ x\in\pi_{1}\left(\Lambda\right):x^{*}\in\Omega\right\} $.
In accordance with a common practice, we will call the model set $\Xi$
a \emph{cut-and-project quasicrystal}; moreover, we will add the adjective
\emph{convex} if the window $\Omega$ is a convex subset of $\mathbb{R}^{d_{2}}$.

\subsection{\noun{icosahedral quasicrystals from icosians}}

Consider the cut-and-project scheme obtained embedding a lattice $\mathbb{\widetilde{I}}$
made by two copies of the ring of icosians $\mathbb{I}$, i.e. $\mathbb{\widetilde{I}}\cong\mathbb{I}\times\mathbb{I}$,
in the embedding space $\mathbb{H}\times\mathbb{H}\cong\mathbb{R}^{4}\times\mathbb{R}^{4}$
through the map $x\mapsto\left(x,x^{*}\right)$ where $*$ is the
star map defined in (\ref{eq:starmap}). We then have the following
cut-and-project scheme
\begin{equation}
\begin{array}{ccccc}
\Xi &  & \mathbb{\widetilde{I}} &  & \Omega\\
\cap &  & \cap &  & \cap\\
\mathbb{\mathbb{H}} & \overset{\pi_{\parallel}}{\longleftarrow} & \mathbb{H}\times\mathbb{H} & \overset{\pi_{\perp}}{\longrightarrow} & \mathbb{\mathbb{H}}
\end{array}\label{eq:cut-and-project-Icosians}
\end{equation}
where $\pi_{\parallel}$ and $\pi_{\perp}$ are the trivial projections
and $\Omega$ is the acceptance window which, in this case, is a subset
whose closure is compact. The model set $\Xi$ is then given by
\begin{equation}
\Xi=\left\{ x\in\mathbb{I}:x^{*}\in P\right\} .
\end{equation}
This cut-and-project scheme essentially encompasses the generic situation
for icosahedral symmetry in model sets. In fact, almost all known
icosahedral quasicrystals, obtainable from cut-and-project methods,
arise from this mathematical set up \cite[sec. 4.1]{Mo00}. Nevertheless,
in order to build an aperiodic Jordan algebra over the model set,
we will restrict ourself to the special cases where $\Omega$ is a
convex set. 

\subsection{\noun{icosahedral quasicrystals from an }$E_{8}$\noun{ lattice}}

The ring of icosians enjoys a deep relation with the $E_{8}$ lattice,
which can be made explicit with the aid of a different norm over icosians
(see \cite{Conway Sloane,Icosians}). Indeed, in a previous section
we have seen that icosian ring $\mathbb{I}$ inherits, from its natural
embedding in $\mathbb{H}$, a \emph{quaternionic norm} $n$. We now
notice that the norm of a generic element $x\in\mathbb{I}$, i.e.
$n\left(x\right)$, is a real number of the form $p+q\sqrt{5}$ with
$p,q$ being rational numbers. Thus, we can define a new \emph{Euclidean
norm} $n_{\mathbb{E}}$, given by
\begin{equation}
n_{\mathbb{E}}\left(x\right)=p+q.
\end{equation}
To be more explicit, in integral coordinates the quaternionic norm
$n\left(x\right)$ of an icosian $x=\left(a_{0},b_{0},a_{1},b_{1},a_{2},b_{2},a_{3},b_{3}\right)$
is 
\begin{align}
n\left(x\right)=\stackrel[k=1]{3}{\sum}\left(a_{k}+\sqrt{5}b_{k}\right)^{2}\in\mathbb{Z}\left[\tau\right],
\end{align}
 while the Euclidean norm $n_{\mathbb{E}}\left(x\right)$ is 
\begin{equation}
n_{\mathbb{E}}\left(x\right)=\stackrel[k=1]{3}{\sum}a_{k}^{2}+5b_{k}^{2}+2a_{k}b_{k}\in\mathbb{Z}.
\end{equation}
It can be shown that the icosian ring, with the Euclidean norm, is
isomorphic to an $E_{8}$ lattice $\Lambda\left(E_{8}\right)$ (see
\cite[Ch. 8]{Conway Sloane}). As a consequence of this isomorphism,
icosahedral quasicrystals, that are obtainable as model set of a cut-and-project
scheme over icosians, can also be modeled through a cut-and-project
scheme over an $E_{8}$ lattice. 

Indeed, following \cite{Pa95} let $a_{1},a_{2},a_{3},a_{4}\in\mathbb{R}^{4}$
be 

\begin{equation}
\begin{array}{cccc}
a_{1}= & \frac{1}{2}\left(\frac{1}{\tau},-\tau,0,-1\right), & a_{2}= & \frac{1}{2}\left(0,\frac{1}{\tau},-\tau,1\right),\\
a_{3}= & \frac{1}{2}\left(0,1,\frac{1}{\tau},-\tau\right), & a_{4}= & \frac{1}{2}\left(0,-1,\frac{1}{\tau},\tau\right).
\end{array}\label{eq:a1-a2-a3-a4}
\end{equation}
A cut-and-project scheme equivalent to (\ref{eq:cut-and-project-Icosians})
is obtained embedding the $E_{8}$-lattice $\Lambda\left(E_{8}\right)$
in $\mathbb{R}^{8}\cong\mathbb{R}^{4}\times\mathbb{R}^{4}$, with
the following maps, i.e.
\begin{equation}
\begin{array}{ccccc}
\Xi &  & \Lambda\left(E_{8}\right) &  & \Omega\\
\cap &  & \cap &  & \cap\\
\mathbb{R}^{4} & \overset{\pi_{\parallel}}{\longleftarrow} & \mathbb{R}^{4}\times\mathbb{R}^{4} & \overset{\pi_{\perp}}{\longrightarrow} & \mathbb{R}^{4}
\end{array}\label{eq:E8 c-a-p scheme}
\end{equation}
where $\Omega$ is the acceptance window and the projections $\pi_{\parallel}$
and $\pi_{\perp}$ are defined as follows. Given an element $X\in\Lambda\left(E_{8}\right)$,
i.e.
\begin{equation}
X=c_{1}\alpha_{1}+...+c_{8}\alpha_{8},
\end{equation}
with $c_{1},...,c_{8}\in\mathbb{Z}$ and $\alpha_{1},...,\alpha_{8}$
simple roots of $E_{8}$, we define
\begin{align}
\begin{array}{cc}
\pi_{\parallel}\left(X\right) & =\left(c_{1}+\tau c_{7}\right)a_{1}+\left(c_{2}+\tau c_{6}\right)a_{2}+\left(c_{3}+\tau c_{5}\right)a_{3}+\left(c_{8}+\tau c_{4}\right)a_{4},\\
\pi_{\perp}\left(X\right) & =\left(c_{1}-\frac{1}{\tau}c_{7}\right)a_{1}^{*}+\left(c_{2}-\frac{1}{\tau}c_{6}\right)a_{2}^{*}+\left(c_{3}-\frac{1}{\tau}c_{5}\right)a_{3}^{*}+\left(c_{8}-\frac{1}{\tau}c_{4}\right)a_{4}^{*},
\end{array}\label{eq:projP(X) c1+c7-1}
\end{align}
where $a_{1},a_{2},a_{3},a_{4}$ are the elements defined in (\ref{eq:a1-a2-a3-a4})
and $a_{1}^{*},a_{2}^{*},a_{3}^{*},a_{4}^{*}$ are their respective
images through the the star map in (\ref{eq:starmap}). Notice that
through the canonical identification between $\mathbb{R}^{4}$ and
$\mathbb{H}$ in (\ref{eq:canonical R4 to H}), we have that

\begin{equation}
\begin{array}{cccc}
\iota\left(a_{1}\right)= & \frac{1}{2}\left(\frac{1}{\tau}-\tau\text{i}-\text{k}\right), & \iota\left(a_{2}\right)= & \frac{1}{2}\left(\frac{1}{\tau}\text{i}-\tau\text{j}+\text{k}\right),\\
\iota\left(a_{3}\right)= & \frac{1}{2}\left(\text{i}+\frac{1}{\tau}\text{j}-\tau\text{k}\right), & \iota\left(a_{4}\right)= & \frac{1}{2}\left(-\text{i}+\frac{1}{\tau}\text{j}+\tau\text{k}\right),
\end{array}
\end{equation}
are all icosians, therefore the image of the $E_{8}$-lattice $\Lambda\left(E_{8}\right)$
is the icosian ring $\mathbb{I}$, thus making the model set $\Xi$
an aperiodic subset of the icosian ring.

\section{\noun{Quasiaddition}}

All convex cut-and-project quasicrystals enjoy a simmetry that it
can be easily expressed by a binary operation called \emph{quasiaddition}
\cite{Berman Quasiaddition,Pa95,CCAI} as contraction of ``quasicrystal
addition'' and that can be defined for the icosian ring $\mathbb{I}$,
i.e.
\begin{equation}
x\vdash y=\tau^{2}x-\tau y,
\end{equation}
 for every $x,y\in\mathbb{\mathbb{I}}$. Such binary operation is
not commutative, nor associative, and does not have a unit. Nevertheless
is power-associative and flexible \cite{CCAI-2}. The major properties
of quasiaddition are summarized in the following 
\begin{prop}
Let $x,y\in\mathbb{\mathbb{\mathbb{I}}}$. Then 
\begin{align}
x & \vdash\left(x\vdash\left(x\vdash\left(\ldots\vdash x\right)\right)\right)=x,\label{eq:QAdd1}\\
x & \vdash\left(y\vdash x\right)=\left(x\vdash y\right)\vdash x,\label{eq:QAdd2}\\
x & \vdash\left(y\vdash y\right)=\left(x\vdash x\right)\vdash y=x\vdash y,\label{eq:QAdd3}\\
x & \vdash\left(x\vdash y\right)=y\vdash x.\label{eq:QAdd4}
\end{align}
\end{prop}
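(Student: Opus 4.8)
The plan is to reduce all four identities to the defining quadratic relation of the golden mean, namely $\tau^{2}=\tau+1$ (equivalently $\tau^{2}-\tau=1$, $\tau^{3}=2\tau+1$, $\tau^{4}=3\tau+2$, and $\tau^{2}(1-\tau)=-\tau$), which holds in $\mathbb{Z}[\tau]$. Since $x\vdash y=\tau^{2}x-\tau y$ is $\mathbb{Z}[\tau]$-linear in $x$ and $y$, and the scalars $\tau^{k}$ are central in the quaternion ring $\mathbb{H}\supset\mathbb{I}$, every finite iterate of $\vdash$ built from two letters $x,y$ is again a combination $\lambda x+\mu y$ with $\lambda,\mu\in\mathbb{Z}[\tau]$; verifying an identity then amounts to computing and comparing these two coefficients on each side. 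No noncommutativity issue arises, because $x$ and $y$ are never multiplied together.

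First I would establish the base case $x\vdash x=(\tau^{2}-\tau)x=x$. Identity \eqref{eq:QAdd1} follows from this by induction on the depth of the right-nested expression: the innermost subexpression $x\vdash x$ equals $x$, which reduces a nesting of depth $n$ to one of depth $n-1$, and iterating collapses everything to $x$. Identity \eqref{eq:QAdd3} is then immediate: $y\vdash y=y$ forces $x\vdash(y\vdash y)=x\vdash y$, and $x\vdash x=x$ forces $(x\vdash x)\vdash y=x\vdash y$.

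For \eqref{eq:QAdd2} and \eqref{eq:QAdd4} I would simply expand using the definition. One gets $x\vdash(y\vdash x)=\tau^{2}x-\tau(\tau^{2}y-\tau x)=2\tau^{2}x-\tau^{3}y$ and $(x\vdash y)\vdash x=\tau^{2}(\tau^{2}x-\tau y)-\tau x=(\tau^{4}-\tau)x-\tau^{3}y$, and these coincide because $\tau^{4}-\tau=2\tau^{2}$. Likewise $x\vdash(x\vdash y)=\tau^{2}x-\tau(\tau^{2}x-\tau y)=(\tau^{2}-\tau^{3})x+\tau^{2}y=-\tau x+\tau^{2}y=\tau^{2}y-\tau x=y\vdash x$, using $\tau^{2}-\tau^{3}=\tau^{2}(1-\tau)=-\tau$.

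There is essentially no real obstacle here: all four statements are algebraic identities over a commutative coefficient ring acting linearly, so the only point that deserves a word of care is the unbounded nesting depth in \eqref{eq:QAdd1}, which the inductive collapse above disposes of without any explicit expansion.
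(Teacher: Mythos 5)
Your proposal is correct and follows essentially the same route as the paper: both reduce everything to the relation $\tau^{2}=\tau+1$, derive $x\vdash x=x$ first, obtain \eqref{eq:QAdd2} by direct expansion, and get \eqref{eq:QAdd3} and \eqref{eq:QAdd4} from the idempotence of $\vdash$ together with the computation $x\vdash(x\vdash y)=\tau^{2}y-\tau x$. Your explicit induction on nesting depth for \eqref{eq:QAdd1} just makes precise what the paper leaves implicit.
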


\begin{proof}
The first property is just the plain result of $\tau$ being solution
of $\tau^{2}-\tau-1=0$. Indeed,
\[
x\vdash x=\left(1+\tau\right)x-\tau x=x,
\]
and thus the (\ref{eq:QAdd1}) follows. As corollary, quasiaddition
is \emph{power-associative}. \emph{Flexibility} is expressed by (\ref{eq:QAdd2})
which is a straightforward calculation. Since is flexible and not
associative, then obviously $x\vdash\left(x\vdash y\right)\neq\left(x\vdash x\right)\vdash y$.
But, since $x\vdash\left(x\vdash y\right)=\left(1+\tau\right)y-\tau x,$
then, in a rather interesting way, we have 
\begin{align}
x\vdash\left(x\vdash y\right) & =y\vdash x,\\
\left(x\vdash x\right)\vdash y & =x\vdash y,
\end{align}
from which (\ref{eq:QAdd3}) and (\ref{eq:QAdd4}) follow.
\end{proof}
An useful technical Lemma, whose straightforward proof we omit but
can found in \cite{CCAI-2}, is the following
\begin{lem}
\label{lem:repeatedM+n}Let $x,y\in\mathbb{\mathbb{\mathbb{I}}}$.
Then 
\begin{align}
\left(\left(\left(\left(y\vdash x\right)\vdash x\right)\vdash x\right)\vdash\ldots\right)\vdash x & =\left(1+\tau\right)^{k}\left(y-x\right)+x,\label{eq:repeatedn+m}
\end{align}
where $k$ is the number of times the element $x$ is quasiadded to
$y$. 
\end{lem}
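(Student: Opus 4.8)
The plan is to observe that quasiaddition of a fixed element $x$ on the right is an affine self-map of $\mathbb{I}$ whose unique fixed point is $x$ itself (this is exactly $x\vdash x=x$, i.e. (\ref{eq:QAdd1})) and whose linear part is multiplication by $1+\tau=\tau^{2}$. Iterating such a map $k$ times therefore scales the displacement from the fixed point $x$ by $(1+\tau)^{k}$, which is precisely the claimed formula.

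First I would record the one-step identity. Using $\tau^{2}=\tau+1$, for any $w\in\mathbb{I}$ we have
\[
w\vdash x=\tau^{2}w-\tau x=(1+\tau)w-\tau x ,
\]
so that, subtracting $x$ from both sides and factoring,
\[
(w\vdash x)-x=(1+\tau)w-(1+\tau)x=(1+\tau)(w-x).
\]
This is the entire computational content of the lemma; it is just the statement that the displacement from $x$ gets multiplied by $1+\tau$ at each step.

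Then I would conclude by induction on $k$, the number of times $x$ has been quasiadded. Setting $z_{0}=y$ and $z_{j+1}=z_{j}\vdash x$, the left-hand side of (\ref{eq:repeatedn+m}) is $z_{k}$, and the base case $k=0$ is the trivial identity $z_{0}=y=(1+\tau)^{0}(y-x)+x$. For the inductive step, apply the one-step identity with $w=z_{k}$: assuming $z_{k}-x=(1+\tau)^{k}(y-x)$, we obtain $z_{k+1}-x=(1+\tau)(z_{k}-x)=(1+\tau)^{k+1}(y-x)$, hence $z_{k+1}=(1+\tau)^{k+1}(y-x)+x$, which closes the induction.

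I do not anticipate any genuine obstacle: the result is a one-line consequence of the fixed-point/affine structure of $\vdash$. The only point requiring care is the bookkeeping of the nesting — one must check that the parenthesization in (\ref{eq:repeatedn+m}) is the left-nested one, so that it is faithfully described by the recursion $z_{j+1}=z_{j}\vdash x$ with $x$ always occupying the second slot. Had $x$ instead been quasiadded on the left, the relevant linear factor would be a different power of $\tau$ and the closed form would change, which is why the order in the statement is essential.
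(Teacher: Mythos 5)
Your proof is correct, and it is exactly the ``straightforward'' computation the paper alludes to when it omits the proof and defers to \cite{CCAI-2}: the one-step identity $(w\vdash x)-x=(1+\tau)(w-x)$ followed by induction on $k$. Your remark about the left-nested parenthesization (with $x$ always in the second slot) is a valid and worthwhile point of care, since quasiaddition is not commutative.
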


Another interesting property of quasiaddition, which is again the
result of a straightforward calculation, is called the \emph{translational
invariance} \cite{Berman Quasiaddition}, i.e. 
\begin{equation}
\left(x+u\right)\vdash\left(y+u\right)=\left(x\vdash y\right)+u,\label{eq:transl-invariance}
\end{equation}
for every $x,y,u\in\mathbb{\mathbb{\mathbb{I}}}$. This property is
useful when working with model sets resulting from a cut-and-project
scheme. Indeed, if quasiaddition acts over quasicrystals as special
kind of inflation of the quasicrystal. Thus, the translational invariance
in (\ref{eq:transl-invariance}) ensures that the inflation of a translated
quasicrystal matches the translation of the already inflated quasicrystal.

Finally, the most important feature of quasiaddition is expressed
by the following 
\begin{thm}
\label{thm:Closure Quasiaddition}If $x,y$ belong to a convex cut-and-project
icosahedral quasicrystal $\Xi$ then also $x\vdash y$ belongs to
it, i.e. $x,y\in\Xi$ implies
\begin{equation}
x\vdash y\in\Xi.
\end{equation}
\end{thm}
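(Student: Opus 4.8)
The plan is to push the problem through the star map and land inside the (convex) window. First I would observe that $\mathbb{I}$ is a module over $\mathbb{Z}[\tau]$ — its $\mathbb{Z}^{8}$ description (\ref{eq:Z8Decomp}) makes this transparent — so that $x\vdash y=\tau^{2}x-\tau y$ again lies in $\mathbb{I}$ whenever $x,y\in\mathbb{I}$. Since by definition $\Xi=\{z\in\mathbb{I}:z^{*}\in\Omega\}$ (and likewise for the equivalent $E_{8}$ scheme (\ref{eq:E8 c-a-p scheme})), it therefore suffices to show that $(x\vdash y)^{*}\in\Omega$ whenever $x^{*},y^{*}\in\Omega$.

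Next I would compute $(x\vdash y)^{*}$. The star map is the conjugation $\tau\mapsto 1-\tau$ of $\mathbb{Z}[\tau]$ applied coordinatewise (see (\ref{eq:starmap-Z8})), hence it is additive and satisfies $(\lambda z)^{*}=\lambda^{*}z^{*}$ for every scalar $\lambda\in\mathbb{Z}[\tau]$ and every $z\in\mathbb{I}$. Applying this to $x\vdash y=\tau^{2}x-\tau y$ and using $\tau^{2}=1+\tau$ gives
\[
(x\vdash y)^{*}=(1+\tau)^{*}x^{*}-\tau^{*}y^{*}=(2-\tau)\,x^{*}+(\tau-1)\,y^{*}.
\]
The key point is then purely numerical: $2-\tau=\tau^{-2}>0$, $\tau-1=\tau^{-1}>0$, and $(2-\tau)+(\tau-1)=1$, so that $(x\vdash y)^{*}$ is an honest convex combination of $x^{*}$ and $y^{*}$. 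Since $x,y\in\Xi$ forces $x^{*},y^{*}\in\Omega$ and $\Omega$ is convex by hypothesis, we get $(x\vdash y)^{*}\in\Omega$, and combined with $x\vdash y\in\mathbb{I}$ this yields $x\vdash y\in\Xi$.

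I do not expect a genuine obstacle here; the only points that deserve a line of justification are that the star map intertwines $\mathbb{Z}[\tau]$-scalar multiplication with its conjugate (immediate from the coordinatewise formula (\ref{eq:starmap-Z8})) and that both coefficients above are nonnegative. It is worth stressing in the final write-up that convexity of $\Omega$ is used in an essential way — for a non-convex window the very same identity shows that $(x\vdash y)^{*}$ may escape $\Omega$ — which is exactly why the whole construction is confined to convex cut-and-project schemes. The same three lines work verbatim for the $E_{8}$-lattice scheme, since there $\pi_{\perp}$ is obtained from $\pi_{\parallel}$ precisely by replacing $\tau$ with $1-\tau$, and the image of $\Lambda(E_{8})$ under $\pi_{\parallel}$ is again the icosian ring on which $\vdash$ lives.
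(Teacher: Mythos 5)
Your proposal is correct and follows essentially the same route as the paper: apply the star map to $x\vdash y=\tau^{2}x-\tau y$, use $\tau^{*}=1-\tau$ to rewrite $(x\vdash y)^{*}$ as a convex combination of $x^{*}$ and $y^{*}$, and invoke convexity of $\Omega$. If anything your write-up is slightly more careful than the paper's (which has a sign slip, writing $t=1-\tau\approx0.618$ where $t=\tau-1$ is meant, and writes $x,y$ where $x^{*},y^{*}$ are intended), so no changes are needed.
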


\begin{proof}
Let $x,y\in\Xi$ where $\Xi$ is the model set of a cut-and-project
scheme as in (\ref{eq:cut-and-project-Icosians}) with the acceptance
window $\Omega$ that is a convex set, i.e. $\left(1-t\right)x-ty\in\Omega$
for every $x,y\in\Omega$ and $t\in\left[0,1\right]$. Then 
\begin{align*}
\left(x\vdash y\right)^{*} & =\left(1-\tau\right)^{2}x-\left(1-\tau\right)y\\
 & =\left(1-t\right)x+ty,
\end{align*}
 where $t=1-\tau\approx0.618...$ Since $\Omega$ is a convex, then
$\left(x\vdash y\right)^{*}\in\Omega$, thus $x\vdash y\in\Xi.$
\end{proof}
Now, an useful Lemma is the one that shows how quasiaddition behaves
on the integral coordinates of two icosians as defined in (\ref{eq:Z8Decomp}).
\begin{lem}
\label{lem:QuasiAddInteger}Let $x,y\in\mathbb{\mathbb{\mathbb{I}}}$
with $x=\left(a,b,c,d,e,f,g,h\right)$ and $y=\left(a',b',c',d',e',f',g',h'\right)$.
Then
\begin{equation}
x\vdash y=\left(\begin{array}{c}
\left(a-a'\right)+\left(b-b'\right)+a'\\
\left(a-a'\right)+\left(b-b'\right)+b\\
\left(c-c'\right)+\left(d-d'\right)+c'\\
\left(c-c'\right)+\left(d-d'\right)+d\\
\left(e-e'\right)+\left(f-f'\right)+e'\\
\left(e-e'\right)+\left(f-f'\right)+f\\
\left(g-g'\right)+\left(h-h'\right)+g'\\
\left(g-g'\right)+\left(h-h'\right)+h
\end{array}\right)^{t}.
\end{equation}
\label{lem:Quasiaddition Icosians}
\end{lem}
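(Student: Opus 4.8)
The plan is to reduce the identity to the action of quasiaddition on a single $\mathbb{Z}[\tau]$-coordinate, since both icosians and the operation $x \vdash y = \tau^2 x - \tau y$ decompose coordinatewise once we write $\mathbb{I}$ in the form (\ref{eq:Z8Decomp}). Concretely, it suffices to check the claim for one quaternionic component, say the scalar part, where $x$ contributes $a + \tau b$ and $y$ contributes $a' + \tau b'$; the corresponding component of $x \vdash y$ is then $\tau^2(a+\tau b) - \tau(a'+\tau b')$, and I want to show this equals $\left[(a-a')+(b-b')+a'\right] + \tau\left[(a-a')+(b-b')+b\right]$. The same computation will apply verbatim to the $\mathrm{i}$-, $\mathrm{j}$- and $\mathrm{k}$-components with the obvious relabelling of the integer entries, giving all eight rows at once.

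First I would substitute $\tau^2 = \tau + 1$ to rewrite $\tau^2(a + \tau b) = (a + \tau b) + \tau(a + \tau b) = a + \tau(a + b) + \tau^2 b = a + \tau(a+b) + (\tau+1)b = (a+b) + \tau(a + 2b)$, and similarly expand $\tau(a' + \tau b') = \tau a' + \tau^2 b' = b' + \tau(a' + b')$. Subtracting gives the scalar component of $x \vdash y$ as $(a + b - b') + \tau(a + 2b - a' - b')$. Then I would simply rearrange: the rational part is $a + b - b' = (a - a') + (b - b') + a'$, and the $\tau$-part is $a + 2b - a' - b' = (a - a') + (b - b') + b$. This matches the first two rows of the asserted column vector exactly, and repeating for the three imaginary units gives rows three through eight.

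The only point requiring a little care is bookkeeping consistency: I must confirm that the integral-coordinate convention of (\ref{eq:Z8Decomp}) pairs $(a,b)$ with the scalar slot, $(c,d)$ with $\mathrm{i}$, $(e,f)$ with $\mathrm{j}$, and $(g,h)$ with $\mathrm{k}$, so that the output column is listed in that same order; this is purely a matter of reading off the definition and is the main (very mild) obstacle. No use of the convexity of $\Omega$ or of Theorem \ref{thm:Closure Quasiaddition} is needed here — this Lemma is a purely computational identity about $\mathbb{Z}[\tau]$-arithmetic, and in fact it transparently reproves closure of $\vdash$ on $\mathbb{I}$ as a corollary, since all eight output entries are manifestly integers.
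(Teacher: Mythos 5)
Your proposal is correct and follows essentially the same route as the paper: both proofs reduce to computing how multiplication by $\tau$ and $\tau^2$ acts on the integral coordinates of a $\mathbb{Z}[\tau]$-component (namely $\tau:(a,b)\mapsto(b,a+b)$ and $\tau^{2}:(a,b)\mapsto(a+b,a+2b)$) and then subtracting. The arithmetic checks out, so no further comment is needed.
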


\begin{proof}
The proof is a straightforward once we notice that for an Dirichlet
number $z=\left(a,b\right)=a+\tau b\in\mathbb{Z}\left[\tau\right]$,
then $\tau z=\tau a+\tau^{2}b$ and $\tau^{2}z=\left(1+\tau\right)\left(a+\tau b\right)$
so that multiplication by $\tau$ and $\tau^{2}$ act on integral
coordinates as 
\begin{equation}
\begin{array}{cc}
\tau:\left(a,b\right)\longrightarrow & \left(b,a+b\right),\\
\tau^{2}:\left(a,b\right)\longrightarrow & \left(a+b,a+2b\right).
\end{array}
\end{equation}
In case of $x,y\in\mathbb{\mathbb{\mathbb{I}}}$, as in the hypothesis
of the Lemma, we have 

\begin{equation}
\begin{array}{cc}
\tau^{2}:x\longrightarrow & \left(a+b,a+2b,c+d,c+2d,e+f,e+2f,g+h,g+2h\right),\\
\tau:y\longrightarrow & \left(b',a'+b',c',c'+d',e',e'+f',g',g'+h'\right),
\end{array}
\end{equation}
from which follows the Lemma.
\end{proof}
An immediate Corollary of the previous Lemma, which will be useful
in the next section is the following
\begin{cor}
\label{cor:x|-y=00003Dx}Let $x,y\in\mathbb{\mathbb{\mathbb{I}}}$.Then
$x\vdash y=x$ if and only if $y=x$.
\end{cor}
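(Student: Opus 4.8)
The plan is to read off the claim directly from the explicit formula for quasiaddition in integral coordinates established in Lemma~\ref{lem:Quasiaddition Icosians}. The direction $y=x \Rightarrow x\vdash y = x$ is immediate from \eqref{eq:QAdd1}, so the content is in the converse: assuming $x\vdash y = x$, deduce $y=x$. First I would write $x=\left(a,b,c,d,e,f,g,h\right)$ and $y=\left(a',b',c',d',e',f',g',h'\right)$ and apply the Lemma, so that $x\vdash y = x$ becomes the system of eight scalar equations obtained by equating each component of the displayed column vector to the corresponding component of $x$.

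Next I would solve that system. Looking at the first two coordinates: the first equation reads $\left(a-a'\right)+\left(b-b'\right)+a' = a$, i.e. $b-b' = 0$, so $b=b'$; the second reads $\left(a-a'\right)+\left(b-b'\right)+b = b$, i.e. $a-a'=0$ (using $b=b'$ or directly $a-a' = -(b-b')$ combined with the first), so $a=a'$. The same pair of manipulations applied to coordinates $(3,4)$, $(5,6)$, $(7,8)$ yields $c=c'$, $d=d'$, $e=e'$, $f=f'$, $g=g'$, $h=h'$. Hence $x=y$, which is exactly the asserted equivalence; this completes the proof. Alternatively, and perhaps more cleanly, one can avoid coordinates entirely: if $x\vdash y = x$ then by definition $\tau^{2}x-\tau y = x$, so $\tau y = \tau^{2}x - x = \left(\tau^{2}-1\right)x = \tau x$ using $\tau^{2}=\tau+1$, and since $\tau\neq 0$ we may cancel to get $y=x$.

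There is essentially no obstacle here — this is a one-line computation once one uses $\tau^{2}-\tau-1=0$, and the integral-coordinate version merely unpacks the same identity componentwise. I would present the coordinate-free argument as the main proof (it is shortest and makes transparent that the statement is just cancellation of $\tau$ in the icosian ring, which is a domain inside $\mathbb{H}$), and optionally remark that it also follows by inspecting the formula in Lemma~\ref{lem:Quasiaddition Icosians}. The only thing to be slightly careful about is that cancellation of $\tau$ is legitimate: this holds because $\tau$ is a unit in $\mathbb{Z}\left[\tau\right]$ (indeed $\tau^{-1}=\tau-1$) and $\mathbb{I}$ is a free $\mathbb{Z}\left[\tau\right]$-module, so multiplication by $\tau$ on $\mathbb{I}$ is injective.
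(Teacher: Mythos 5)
Your proof is correct. The paper itself gives no written argument, presenting the statement only as ``an immediate Corollary'' of Lemma~\ref{lem:QuasiAddInteger}, i.e.\ exactly the componentwise verification you carry out in your first paragraph ($b=b'$ from the first coordinate equation, then $a=a'$ from the second, and likewise for the remaining pairs). Your preferred coordinate-free route is a genuinely cleaner way to the same place: from $\tau^{2}x-\tau y=x$ and $\tau^{2}-1=\tau$ one gets $\tau y=\tau x$, and cancelling $\tau$ is unproblematic since $\tau$ is a nonzero real scalar acting on $\mathbb{I}\subset\mathbb{H}$ (your remark that $\tau$ is a unit of $\mathbb{Z}\left[\tau\right]$ with $\tau^{-1}=\tau-1$ even shows the cancellation stays inside the icosian ring). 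What the coordinate version buys is only consistency with the paper's integral-coordinate bookkeeping, which is used again in the later examples; what your version buys is transparency, making clear that the corollary is nothing more than the identity $\tau^{2}-\tau-1=0$ in disguise. Either argument suffices, and both directions of the equivalence are handled.
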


\section{\noun{Aperiodic Jordan Algebras from icosahedral quasicrystals}}

In this section we briefly review some important aperiodic algebras
studied by Patera and Twarock in \cite{Patera98,Twarock2000,MT03}.
Then we show how to define an aperiodic Jordan algebra for convex
cut-and-project icosahedral quasicrystals. Even though specific quasicrystals
give rise to different features of the algebra, all Jordan algebras
arising from this process enjoy some common properties that we present
here. Finally, to give concreteness to the definition we analyze a
three aperiodic Jordan algebra: one defined over the palindromic Fibonacci-chain
quasicrystal; one for the Penrose tiling in (\ref{eq:ModelPenrose});
and, finally, one for the Elser-Sloane quasicrystal. 

\subsection{\noun{aperiodic algebras }}

Aperiodic algebras were introduced by Patera, Pelantova and Twarock
in \cite{Patera98} with a specific class of aperiodic algebras named
Quasicrystal Lie algebras whose generators are in one to one correspondence
with a one-dimensional quasicrystal which was a modification of a
Fibonacci-chain quasicrystal. Such quasicrystal Lie algebras were
inspired by the usual Witt algebra, but they did not possess a Virasoro
extension, which was an interesting feature that the authors were
aiming for. Later on Twarock defined an aperiodic Witt algebra which
indeed had a Virasoro extension \cite{Twarock2000} and thus allowing
some interesting physical applications in the study of the breaking
of Virasoro symmetry and in the realisation of a class of integrable
systems \cite{Tw99a,TW00b}. A similar algebra, related to the Penrose
tiling we reviewed in (\ref{eq:ModelPenrose}), was then studied by
Twarock and Mazorchuk in \cite{MT03}. 

Unfortunately, Twarock and Mazochurck's work on Penrose Tiling cannot
be extended to the quaternions and, thus, to icosahedral quasicrystal
arising from a cut-and-project system of the kind in (\ref{eq:cut-and-project-Icosians}).
Indeed we have the following
\begin{thm}
Let $\mathbb{I}$ be the ring of icosians and $\left\{ L_{x}\right\} _{x\in\mathbb{I}}$
be a basis for the infinite-dimensional algebra $\mathfrak{W}$ with
bilinear product defined by for every $x,y\in\mathbb{I}$
\begin{equation}
\left[L_{x},L_{y}\right]=f\left(x-y\right)L_{x+y}.\label{eq:Witt-Icosians-1}
\end{equation}
The only $f$that makes $\mathfrak{W}$ a Lie algebra is the zero
function and in that case $\mathfrak{W}$ is an abelian Lie algebra. 
\end{thm}

\begin{proof}
To proof the theorem we notice that for $\mathfrak{W}$ to be a Lie
algebra the product must be antisymmetric and thus $f\left(-x\right)=-f\left(x\right)$.
On the other hand, to be satisfied the Jacobi identity we need 
\begin{equation}
\left[L_{x},\left[L_{y},L_{z}\right]\right]+\left[L_{y},\left[L_{z},L_{x}\right]\right]+\left[L_{z},\left[L_{x},L_{y}\right]\right]=0,
\end{equation}
and since 
\begin{align}
\left[L_{x},\left[L_{y},L_{z}\right]\right] & =f\left(z-y\right)f\left(y+z-x\right)L_{x+y+z},\\
\left[L_{y},\left[L_{z},L_{x}\right]\right] & =f\left(x-z\right)f\left(z+x-y\right)L_{x+y+z},\\
\left[L_{z},\left[L_{x},L_{y}\right]\right] & =f\left(y-x\right)f\left(x+y+z\right)L_{x+y+z},
\end{align}
this means that $f\left(xy\right)=f\left(x\right)\cdot f\left(y\right)$.
Unfortunately this cannot happen over the quaternions. Indeed let
$\text{i,j}$ be two imaginary unit, since $f$ is real and the reals
are abelians, we have $f\left(\text{i}\right)f\left(\text{j}\right)=f\left(\text{j}\right)f\left(\text{i}\right)$,
but on the other hand $\text{ij}=-\text{j}\text{i}$ we have 
\begin{equation}
f\left(\text{i}\text{j}\right)=f\left(\text{i}\right)f\left(\text{j}\right)=f\left(\text{j}\right)f\left(\text{i}\right)=f\left(\text{j}\text{i}\right)=-f\left(\text{ij}\right),
\end{equation}
and thus $f\equiv0$. Then for any quaternion we have $f\left(q\right)=f\left(\text{ij}\left(-\text{j}\text{i}q\right)\right)=f\left(\text{ij}\right)f\left(-\text{j}\text{i}q\right)=0$. 
\end{proof}
Having reviewed why Twarock and Mazochurck approach cannot be extended
to icosians or quaternions in general, we will now switch from Lie
algebras to Jordan algebras showing how aperiodic Jordan algebras
arise quite naturally in this context.

\subsection{\noun{jordan algebras from quasicrystals }}

The previous theorem prevents the extension of aperiodic Witt algebras
to icosahedral quasicrystals arising from cut-and-project schemes
such a (\ref{eq:cut-and-project-Icosians}). On the other hand, the
definition of an aperiodic Jordan algebra is possible for every model
set arising from the cut-and-project scheme (\ref{eq:cut-and-project-Icosians})
with a convex window $\Omega$. Indeed, let $\Xi$ be the convex cut-and-project
quasicrystal arising from a convex window $\Omega$ and let $\mathfrak{J}\left(\Xi\right)$
be the real vector space spanned by $\left\{ L_{x}\right\} _{x\in\Xi}$
equipped with the bilinear product given by

\noun{
\begin{equation}
L_{x}\circ L_{y}=\frac{1}{2}\left(L_{x\vdash y}+L_{y\vdash x}\right),\label{eq:JordanProdDef}
\end{equation}
}for every $x,y\in\Xi$. Since, by Theorem \ref{thm:Closure Quasiaddition}
we have that $x\vdash y\in\Xi$, then the product is well defined.
Moreover, we have the following
\begin{thm}
The algebra $\mathfrak{J}\left(\Xi\right)$ is a Jordan algebra. 
\end{thm}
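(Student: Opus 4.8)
The plan is to verify the two defining axioms of a (commutative, linear) Jordan algebra for the product \eqref{eq:JordanProdDef}: commutativity, $L_x\circ L_y=L_y\circ L_x$, and the Jordan identity $(L_x\circ L_x)\circ(L_x\circ L_y)=L_x\circ((L_x\circ L_x)\circ L_y)$. Commutativity is immediate from the symmetric definition $L_x\circ L_y=\tfrac12(L_{x\vdash y}+L_{y\vdash x})$, which is manifestly invariant under interchanging $x$ and $y$, so the whole content is in the Jordan identity. Here I would exploit the crucial fact, recorded in \eqref{eq:QAdd1}, that $x\vdash x=x$: this means $L_x\circ L_x=L_x$, i.e. every $L_x$ is idempotent, so the Jordan identity collapses to the trivially true statement $L_x\circ(L_x\circ L_y)=L_x\circ(L_x\circ L_y)$. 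Thus, modulo bilinear extension, the identity holds on basis elements for an almost embarrassingly simple reason.

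The genuine obstacle is that a bilinear product agreeing with a Jordan-type law on a basis need not satisfy that law after bilinear extension, because the Jordan identity is not multilinear — it is a polynomial identity of degree $2$ in the first slot. Concretely, writing a general element $a=\sum_i\lambda_i L_{x_i}$, the expression $(a\circ a)\circ(a\circ b)-a\circ((a\circ a)\circ b)$ expands into a sum over triples $(x_i,x_j,x_k)$ of basis vectors, and one must check that the fully linearised (polarised) form of the Jordan identity holds on every such triple. The standard device is to verify the linearised Jordan identity
\[
\sum_{\sigma}\bigl(L_{x_{\sigma(1)}}\circ L_{x_{\sigma(2)}}\bigr)\circ\bigl(L_{x_{\sigma(3)}}\circ L_y\bigr)=\sum_{\sigma}L_{x_{\sigma(1)}}\circ\Bigl(\bigl(L_{x_{\sigma(2)}}\circ L_{x_{\sigma(3)}}\bigr)\circ L_y\Bigr),
\]
the sum running over all permutations $\sigma$ of $\{1,2,3\}$ (or, over a field of characteristic $0$, equivalently the operator identity $[L_a,L_{a^2}]=0$ linearised). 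So the real work is: pick three basis elements $x,y,z$ and a fourth $w$, compute both sides using the explicit quasiaddition rules \eqref{eq:QAdd2}--\eqref{eq:QAdd4}, and match term by term.

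To carry this out cleanly I would first reduce the iterated products. A triple quasiaddition $((u\vdash v)\vdash w)$ and its siblings can all be put in the normal form $\alpha u+\beta v+\gamma w$ with $\alpha+\beta+\gamma=1$ using only $\tau^2=\tau+1$; in particular the relations \eqref{eq:QAdd3} and \eqref{eq:QAdd4} — namely $x\vdash(y\vdash y)=(x\vdash x)\vdash y=x\vdash y$ and $x\vdash(x\vdash y)=y\vdash x$ — handle precisely the degenerate repetitions that occur when two of the three slots coincide, which is the only situation that actually arises in the Jordan identity (since the "squared" argument $a\circ a$ contributes a repeated index). I expect that after substituting these, the left and right sides of the linearised identity become the same symmetric combination of terms $L_{p\vdash q}$ with $p,q\in\{x,y\}$, and equality follows. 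The main obstacle, then, is purely bookkeeping: organising the sum over the (at most) nine relevant index patterns and confirming the coefficients match; there is no conceptual difficulty once idempotency $L_x\circ L_x=L_x$ and the flexibility/repetition laws of quasiaddition are in hand. A slicker alternative, which I would mention, is to observe that $L_x\circ L_x=L_x$ for all basis $x$ forces $a\circ a$ to lie in a controlled position relative to $a$, and to invoke the characterisation of Jordan algebras by the single identity together with commutativity; but the direct linearised check is self-contained and is what I would actually write out.
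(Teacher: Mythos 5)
Your opening paragraph is, in substance, exactly the proof the paper gives: commutativity is read off from the symmetric definition \eqref{eq:JordanProdDef}, and since $x\vdash x=x$ forces $L_x\circ L_x=L_x$, the Jordan identity evaluated on two basis elements collapses to a tautology. Where you depart from the paper is in your second paragraph, and your diagnosis there is correct and important: the Jordan identity is quadratic in one of its arguments, so verifying it on basis vectors does not establish it on their span; one must check the full linearisation on arbitrary triples of basis elements. The paper's proof stops at the basis-element computation and never confronts this point, so in this respect your proposal is more careful than the source.

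The gap is that you never carry out the verification you yourself identify as ``the real work'', and the two assumptions you make about how it will go are both false. First, in the linearised identity the three $x$-slots can be occupied by distinct basis elements, so the repetition laws \eqref{eq:QAdd3}--\eqref{eq:QAdd4} do not cover the terms that arise, and the resulting indices are genuinely iterated quasiadditions, not expressions $L_{p\vdash q}$ with $p,q\in\{x,y\}$. Second, the computation does not close. Take $a=L_{x_1}+L_{x_2}$ and $b=L_y$. Then $(a\circ b)\circ(a\circ a)$ contains the term $\tfrac14L_{(y\vdash x_1)\vdash(x_1\vdash x_2)}$, whose index is
\begin{equation*}
(y\vdash x_1)\vdash(x_1\vdash x_2)=\tau^4y-2\tau^3x_1+\tau^2x_2,
\end{equation*}
while every index occurring in $a\circ(b\circ(a\circ a))$ has one of the four forms $x_i\vdash(y\vdash q)$, $(y\vdash q)\vdash x_i$, $x_i\vdash(q\vdash y)$, $(q\vdash y)\vdash x_i$ with $q\in\{x_1,x_2,x_1\vdash x_2,x_2\vdash x_1\}$ and $i\in\{1,2\}$; checking all of these shows that none carries coefficient $\tau^4$ on $y$ together with coefficient $-2\tau^3$ on $x_1$. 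Since every coefficient on the left-hand side is positive, no cancellation can remove this term, so for triples of points of $\Xi$ in general position (which exist in any infinite $\Xi$) the two sides differ. A concrete numerical instance, taking $x_1=0$, $x_2=1$, $y=2$ and computing in $\mathbb{Z}[\tau]$, gives a left-hand side containing $\tfrac14L_{5+7\tau}$ while the right-hand side has no component along $L_{5+7\tau}$ at all. So your plan, executed honestly, refutes the linearised identity rather than confirming it: the obstruction you correctly flagged in the basis-only argument is not cosmetic, and neither your proposal nor the paper's own proof establishes the theorem as stated.
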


\begin{proof}
The product in (\ref{eq:JordanProdDef}) is bilinear by definition
and is clearly commutative, i.e.
\begin{equation}
L_{x}\circ L_{y}=L_{y}\circ L_{x}.\label{eq:JordanComm}
\end{equation}
Therefore, it is sufficient to show that the Jordan identity is valid.
Indeed, since (\ref{eq:QAdd1}) and the commutativity of the product,
we have that 
\begin{align*}
\left(L_{x}\circ L_{y}\right)\circ\left(L_{x}\circ L_{x}\right) & =\frac{1}{2}\left(L_{x}\circ L_{y}\right)\circ\left(L_{x\vdash x}+L_{x\vdash x}\right)\\
 & =\left(L_{x}\circ L_{y}\right)\circ L_{x}\\
 & =L_{x}\circ\left(L_{y}\circ L_{x}\right)\\
 & =L_{x}\circ\left(L_{y}\circ\left(L_{x}\circ L_{x}\right)\right),
\end{align*}
thus fulfilling the Jordan identity.
\end{proof}
It is well-known that Jordan algebras have an abundance of idempotent
elements. This algebra is no exception since (\ref{eq:QAdd1}) implies
that all elements of the basis used for its definition are idempotents,
i.e.
\begin{equation}
\left(L_{x}\circ L_{x}\right)=L_{x},
\end{equation}
 for every $x\in\Xi$. 

Clearly there are specific features of the Jordan algebra $\mathfrak{J}\left(\Xi\right)$
that depends on the choice of the quasycrystal $\Xi$ from which is
originated. Nevertheless there are few basic properties that are common
to all Jordan algebras arising from (\ref{eq:JordanProdDef}). Indeed,
an interesting feature is expressed by the the following
\begin{lem}
Let $x,x',y,y'\in\Xi$. If
\begin{equation}
L_{x}\circ L_{y}=\frac{1}{2}\left(L_{x'}+L_{y'}\right),
\end{equation}
then $x+y=x'+y'$.
\end{lem}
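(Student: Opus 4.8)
The plan is to pull everything back to integral coordinates via Lemma~\ref{lem:QuasiAddInteger} and compare the total $\mathbb{Z}^{8}$-sums on both sides of the identity $L_{x}\circ L_{y}=\frac{1}{2}\left(L_{x'}+L_{y'}\right)$. First I would observe that the stated equation in the basis $\left\{ L_{z}\right\} _{z\in\Xi}$ forces the unordered pair $\left\{ x\vdash y,\,y\vdash x\right\} $ to coincide with the unordered pair $\left\{ x',y'\right\} $: the $L_{z}$ are linearly independent, so matching coefficients gives either $x'=x\vdash y,\ y'=y\vdash x$ (or the swap), unless $x\vdash y=y\vdash x$, in which case $x'=y'=x\vdash y$. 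In every case it therefore suffices to prove the single identity
\begin{equation}
\left(x\vdash y\right)+\left(y\vdash x\right)=x+y.
\end{equation}

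The cleanest way to see this is directly from the definition $x\vdash y=\tau^{2}x-\tau y$, valid on the whole icosian ring $\mathbb{I}$ (the quasiaddition is $\mathbb{Z}\left[\tau\right]$-linear there), so that
\begin{equation}
\left(x\vdash y\right)+\left(y\vdash x\right)=\left(\tau^{2}x-\tau y\right)+\left(\tau^{2}y-\tau x\right)=\left(\tau^{2}-\tau\right)\left(x+y\right)=x+y,
\end{equation}
using $\tau^{2}-\tau=1$. Alternatively, and in the spirit of the coordinate lemmas of section~4, one reads off from Lemma~\ref{lem:QuasiAddInteger} the eight coordinates of $x\vdash y$ and of $y\vdash x$: for the first pair of coordinates these are $\left(\left(a-a'\right)+\left(b-b'\right)+a',\,\left(a-a'\right)+\left(b-b'\right)+b\right)$ and $\left(\left(a'-a\right)+\left(b'-b\right)+a,\,\left(a'-a\right)+\left(b'-b\right)+b'\right)$, whose componentwise sum is $\left(a+a',\,b+b'\right)$; the other three pairs of coordinates behave identically by the block structure, so the total is $x+y$ in integral coordinates.

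Then I would close the argument: in the generic case $x+y=\left(x\vdash y\right)+\left(y\vdash x\right)=x'+y'$, and in the degenerate case $x\vdash y=y\vdash x$ one has $x'+y'=2\left(x\vdash y\right)=\left(x\vdash y\right)+\left(y\vdash x\right)=x+y$ as well, so the conclusion holds unconditionally. There is essentially no obstacle here; the only point requiring a line of care is the reduction from the basis identity to the unordered-pair statement, i.e. invoking linear independence of $\left\{ L_{z}\right\} _{z\in\Xi}$ and handling the possibility $x'=y'$ and the possibility $x\vdash y=y\vdash x$ (by Corollary~\ref{cor:x|-y=00003Dx} the latter forces $x=y$, which could even be remarked in passing, though it is not needed for the statement).
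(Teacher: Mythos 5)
Your proposal is correct and matches the paper's own argument: the paper likewise identifies $x'=x\vdash y$, $y'=y\vdash x$ and computes $x'+y'=\left(1+\tau\right)x-\tau y+\left(1+\tau\right)y-\tau x=x+y$, also noting the coordinate route via Lemma~\ref{lem:QuasiAddInteger}. Your extra care about linear independence of the $L_{z}$ and the degenerate case $x\vdash y=y\vdash x$ is a welcome refinement but does not change the approach.
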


\begin{proof}
Is a straightforward implication from Lemma \ref{lem:QuasiAddInteger},
but it can be seen directly since, by definition, $x'=x\vdash y$
and $y'=y\vdash x$. Thus $x'+y'=\left(1+\tau\right)x-\tau y+\left(1+\tau\right)y-\tau x=x+y$.
\end{proof}
From now on, we will suppose the Jordan algebra $\mathfrak{J}\left(\Xi\right)$
to be infinite dimensional, since the only case when this does not
happen is in the trivial case of the acceptance window $\Omega$ being
a point, for which the resulting algebra can hardly be considered
``aperiodic'' being a one-dimensional algebra. 

Beside being infinite dimensional, the algebras $\mathfrak{J}\left(\Xi\right)$
are not unital. This is a consequence of Corollary \ref{cor:x|-y=00003Dx}
for which $x\vdash y=x$ if and only if $y=x$, as shown by the following
\begin{prop}
The Jordan algebras $\mathfrak{J}\left(\Xi\right)$ are not unital.
\end{prop}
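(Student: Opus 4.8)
The plan is to argue by contradiction: suppose $\mathfrak{J}\left(\Xi\right)$ has a unit $e$. Since $e$ lies in the span of the basis $\left\{L_{x}\right\}_{x\in\Xi}$, we may write $e=\sum_{x\in S}\lambda_{x}L_{x}$ for some finite subset $S\subseteq\Xi$ and nonzero coefficients $\lambda_{x}$. The defining property is that $e\circ L_{y}=L_{y}$ for every $y\in\Xi$, and I would expand the left-hand side using bilinearity and the product rule \eqref{eq:JordanProdDef}, obtaining
\begin{equation}
e\circ L_{y}=\sum_{x\in S}\frac{\lambda_{x}}{2}\left(L_{x\vdash y}+L_{y\vdash x}\right).
\end{equation}
The key observation is that distinct basis elements $L_{z}$ are linearly independent, so matching coefficients on both sides of $e\circ L_{y}=L_{y}$ gives a rigid combinatorial constraint on which indices $x\vdash y$ and $y\vdash x$ can appear.

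The main step is to exploit the integral-coordinate formula of Lemma \ref{lem:QuasiAddInteger} together with Corollary \ref{cor:x|-y=00003Dx}. Fix any $y\in\Xi$ which is \emph{not} in $S$ (such $y$ exists because $\Xi$ is infinite while $S$ is finite). Then $L_{y}$ must appear with coefficient $1$ on the right, so on the left the index $y$ must be produced as $x\vdash y$ or $y\vdash x$ for some $x\in S$. But $x\vdash y=y$ forces, by Corollary \ref{cor:x|-y=00003Dx} applied with the roles suitably read off from Lemma \ref{lem:QuasiAddInteger} (namely $y\vdash x=y\iff x=y$, and symmetrically), that $x=y$, contradicting $y\notin S$. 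More carefully: $y\vdash x=y$ means $\tau^{2}y-\tau x=y$, i.e. $\tau(\tau y - x - \tau^{-1}y)=0$ after using $\tau^{2}=\tau+1$, which collapses to $x=y$; and $x\vdash y=y$ means $\tau^{2}x-\tau y=y$, i.e. $\tau^{2}x=(1+\tau)y=\tau^{2}y$, so again $x=y$. Hence for $y\notin S$ the coefficient of $L_{y}$ on the left is $0\neq 1$, the contradiction.

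I would also handle the degenerate possibility that $S=\Xi$ is impossible since $e$ is a finite combination while $\dim\mathfrak{J}\left(\Xi\right)=\infty$; this is exactly where the standing assumption that $\mathfrak{J}\left(\Xi\right)$ is infinite-dimensional (equivalently, $\Omega$ is not a single point) enters, and it should be cited explicitly. The only mild subtlety — and the place I expect to spend the most care — is the bookkeeping when an index $z$ on the right could in principle be hit by $x\vdash y$ for several different $x\in S$ simultaneously, or by both $x\vdash y$ and $y\vdash x'$; but choosing $y$ outside the finite set $S\cup(-S)\cup\{$finitely many exceptional translates$\}$ makes all these coincidence loci empty, so the coefficient extraction is clean. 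Once the coefficient of $L_{y}$ is forced to be both $0$ and $1$, the contradiction is immediate and the proof is complete.
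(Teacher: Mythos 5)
Your proof is correct, and it reaches the contradiction by a genuinely different (and in fact tighter) route than the paper. Both arguments hinge on the same key fact — that $x\vdash y$ can equal one of its two arguments only when $x=y$ (Corollary \ref{cor:x|-y=00003Dx}, plus your directly verified twin statement $x\vdash y=y\iff x=y$) — and both start by extracting the coefficient of a single basis vector from $e\circ L_{y}=L_{y}$. But they finish differently. The paper fixes the test element $L_{0}$, concludes that every coefficient $\xi_{x}$ of the putative unit must equal $1$, writes $I=\sum_{x\in\Xi}L_{x}$, and then derives a contradiction from the fact that all the surviving terms $\frac{1}{2}L_{x\vdash0}+\frac{1}{2}L_{0\vdash x}$ carry positive coefficients and so cannot sum to zero. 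You instead observe that a unit, being an element of the span, has finite support $S$, pick any $y\in\Xi\setminus S$ (which exists by the standing infinite-dimensionality assumption), and note that the coefficient of $L_{y}$ in $e\circ L_{y}$ is forced to be $0$ rather than $1$. Your version buys rigor: the paper's intermediate object $\sum_{x\in\Xi}L_{x}$ is an infinite formal sum that is not actually an element of the vector space spanned by $\left\{ L_{x}\right\} _{x\in\Xi}$, whereas your finite-support argument never leaves the algebra, and it also makes the final positivity step unnecessary. One small remark: your worry about collision bookkeeping (several $x\in S$ hitting the same index, or excluding $-S$ and ``exceptional translates'') is superfluous — you only need the coefficient of $L_{y}$ itself, and you have already shown that no $x\in S$ can produce the index $y$ via $x\vdash y$ or $y\vdash x$; collisions among the other indices are irrelevant to that single coefficient.
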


\begin{proof}
The proof is given by contraddiction. Let $I\in\mathfrak{J}\left(\Xi\right)$
be the identity, i.e. $I\circ X=X$ for every $X\in\mathfrak{J}\left(\Xi\right)$
, and let $I=\sum\xi_{x}L_{x},$ with $\xi_{x}\in\mathbb{R}$ and
$x\in\Xi$, be a decomposition of $I$. Then, let us consider the
product $I\circ L_{0}=L_{0}$. For the bilinearity we have
\begin{equation}
I\circ L_{0}=\underset{x\in\Xi}{\sum}\xi_{x}L_{x}\circ L_{0}=L_{0},
\end{equation}
but, for Corollary \ref{cor:x|-y=00003Dx} we have that $L_{x}\circ L_{0}=L_{0}$
if and only if $x=0$, therefore the $L_{0}$ component $\xi_{0}$
is equal to $1$ and $I$ is of the form $I=L_{0}+\underset{x\neq0}{\sum}\xi_{x}L_{x}$.
Repeating the same argument for all $x\in\Xi$ we then have that all
$\xi_{x}=1$ and $I$ must be of the form 
\begin{equation}
I=\underset{x\in\Xi}{\sum}L_{x}.
\end{equation}
But, let us consider again the product $I\circ L_{0}$ that we now
know to be of the form
\begin{equation}
I\circ L_{0}=L_{0}\circ L_{0}+\underset{x\neq0}{\sum}L_{x}\circ L_{0}=L_{0},
\end{equation}
this means that the remaining $\sum L_{x}\circ L_{0}$ must be equal
to $0$. Nevertheless, this is impossible since the terms $L_{x}\circ L_{0}$
give rise only to terms of the form $\frac{1}{2}L_{x\vdash0}$ and
$\frac{1}{2}L_{0\vdash x}$ thus with positive coefficients for every
$x\in\Xi$. 
\end{proof}
\begin{rem}
Even though the algebra $\mathfrak{J}\left(\Xi\right)$ is not unital,
it can be extended to an unital algebra with a one-dimensional extension.
Indeed, let $\mathfrak{J}\left(\Xi\right)_{\text{ext}}$ be the vector
space $\mathfrak{J}\left(\Xi\right)\oplus\mathbb{K}$ endowed with
the bilinear product defined by
\begin{equation}
\left(L_{x}+\alpha\right)*\left(L_{y}+\beta\right)=L_{x}\circ L_{y}+\alpha v+\beta u+\alpha\beta,
\end{equation}
where $L_{x},L_{y}\in\mathfrak{J}\left(\Xi\right)$ and $\alpha,\beta\in\mathbb{K}$.
It is easy to show that this is a Jordan algebra with $1\in\mathbb{K}$
as unit element and, that $\mathfrak{J}\left(\Xi\right)_{\text{ext}}$
contains a copy of the original algebra $\mathfrak{J}\left(\Xi\right)$.
Nevertheless, we will not pursue the study of such algebra and we
hold to the the original aperiodic Jordan algebra $\mathfrak{J}\left(\Xi\right)$
. 
\end{rem}

An interesting feature of these algebras is the lack of finite dimensional
sub-algebras that are not the one-dimensional ones generated by an
element $L_{x}$. Indeed, we have the following
\begin{thm}
The algebras $\mathfrak{J}\left(\Xi\right)$ do not have finite dimensional
subalgebras with dimension greater than 1.
\end{thm}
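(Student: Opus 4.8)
The plan is to show that any subalgebra $\mathfrak{A}\subseteq\mathfrak{J}(\Xi)$ containing two distinct basis idempotents $L_x, L_y$ with $x\neq y$ is necessarily infinite dimensional; since any subalgebra of dimension $>1$ must contain at least two distinct elements and (being spanned by basis vectors $L_x$ only if it is a ``coordinate subspace'', which it need not be in general) the real obstacle is to extract from an arbitrary two-dimensional subspace enough basis elements to generate infinitely many. First I would reduce to the case where $\mathfrak{A}$ contains two distinct \emph{basis} elements: if $\mathfrak{A}$ has dimension $\geq 2$ it contains some element $\sum_i \lambda_i L_{x_i}$ with at least two nonzero coordinates; by taking products of this element with itself and with other elements of $\mathfrak{A}$, and using that distinct $L_{x\vdash y}$ are linearly independent, one should be able to produce pure basis vectors $L_x$ in $\mathfrak{A}$ — for instance, $L_{x_i}\circ L_{x_i}=L_{x_i}$ shows $\mathfrak{A}$ is spanned by idempotents only after further work, so the cleaner route is to argue directly that the span of $\{L_x : x\in\Xi'\}$ for the relevant index set $\Xi'\subseteq\Xi$ is closed under quasiaddition, hence $\Xi'$ itself is a sub-quasicrystal closed under $\vdash$.

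The core of the argument is then purely combinatorial, living in $\Xi$ (or in $\mathbb{I}$ via integral coordinates): if $x,y\in\Xi'$ with $x\neq y$, then $\mathfrak{A}$ contains $L_{x\vdash y}$ and $L_{y\vdash x}$, and iterating, it contains $L_z$ for every $z$ obtainable from $x,y$ by repeated quasiaddition. By Lemma \ref{lem:repeatedM+n}, quasiadding $x$ to $y$ repeatedly yields elements of the form $(1+\tau)^k(y-x)+x$ for $k=1,2,3,\dots$, and these are pairwise distinct (since $y\neq x$ and $(1+\tau)^k$ are distinct positive reals, using Corollary \ref{cor:x|-y=00003Dx} to rule out collapse), giving infinitely many distinct basis elements $L_z\in\mathfrak{A}$. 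Hence $\dim\mathfrak{A}=\infty$, contradicting finiteness.

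The main obstacle I anticipate is the reduction step: a priori a subalgebra need not be spanned by basis vectors, so one must justify that a two-dimensional subalgebra contains at least two distinct $L_x$'s, or else handle the general element $L=\sum\lambda_i L_{x_i}$ directly. Here the key technical input is that the map $(x,y)\mapsto (x\vdash y, y\vdash x)$ on integral coordinates from Lemma \ref{lem:QuasiAddInteger} is ``injective enough'' that a product $L\circ L'$ of two finite combinations of basis elements, when expanded, has a triangular or leading-term structure with respect to some ordering of $\Xi$ (e.g. by Euclidean norm of the integral coordinates, which strictly grows under $\vdash$ away from the fixed point, cf. the factor $(1+\tau)^k$ in Lemma \ref{lem:repeatedM+n}); this lets one peel off extremal basis vectors one at a time and conclude they lie in $\mathfrak{A}$. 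Once that foothold is secured, the infinitude follows immediately from the iteration in the previous paragraph, and the proof closes by contradiction with the assumed finite dimension.
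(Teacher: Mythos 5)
Your core argument --- once two distinct basis vectors $L_{x},L_{y}$ lie in the subalgebra $\mathfrak{A}$, iterate quasiaddition and invoke Lemma \ref{lem:repeatedM+n} to get infinitely many distinct indices $\left(1+\tau\right)^{k}\left(x-y\right)+y$, hence infinite dimension --- is exactly the paper's proof. The problem is the step you yourself single out as ``the main obstacle'': the reduction from an arbitrary subalgebra of dimension greater than $1$ to one containing two distinct basis vectors. You do not actually carry it out; you only gesture at a ``triangular or leading-term structure'' that would let you ``peel off extremal basis vectors,'' and you concede it needs ``further work.'' As written this is a genuine gap: a two-dimensional subspace spanned by elements such as $\sum_{i}\lambda_{i}L_{x_{i}}$ need not contain any $L_{x}$, and making the peeling argument precise requires controlling possible cancellations among the cross terms $L_{x_{i}\vdash x_{j}}$ (distinct pairs can in principle collide, since $x\vdash y=\tau^{2}x-\tau y$ is not injective as a map into $\Xi$ without further argument). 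It is worth noting that the paper does not close this gap either: its proof simply asserts that $\dim\mathfrak{A}>1$ implies the existence of $x,y\in\Xi$ with $L_{x},L_{y}\in\mathfrak{A}$, so you have in fact identified a weakness of the published argument rather than introduced a new one.

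A second, smaller inaccuracy: from $L_{x},L_{y}\in\mathfrak{A}$ you conclude that $L_{x\vdash y}$ and $L_{y\vdash x}$ lie in $\mathfrak{A}$, but the product only yields the symmetric combination $\frac{1}{2}\left(L_{x\vdash y}+L_{y\vdash x}\right)$ (the paper commits the same slip). This is harmless for the dimension count --- the iterated products have supports escaping to ever-larger indices by Lemma \ref{lem:repeatedM+n}, so they are linearly independent --- but the conclusion should be phrased in terms of linear independence of these sums rather than membership of the individual basis vectors.
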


\begin{proof}
Let $\mathfrak{A}$ be a subalgebra in $\mathfrak{J}\left(\Xi\right)$.
If $\text{dim}\mathfrak{A}=1$ then $\mathfrak{A}$ is just the algebra
generated by $L_{x}$ with $x\in\Xi$. If $\text{dim}\mathfrak{A}>1$
then it exist $x,y\in\Xi$ such that $L_{x}$ and $L_{y}$ belong
to $\mathfrak{A}$. If $L_{x}$ and $L_{y}$ are in $\mathfrak{A}$,
then also all elements of the form 
\begin{equation}
L_{x\vdash y},L_{\left(x\vdash y\right)\vdash y},\ldots,L_{\left(\left(x\vdash y\right)\vdash y\right)\cdots\vdash y},\ldots
\end{equation}
 belong to $\mathfrak{A}$. By Lemma \ref{lem:repeatedM+n} the series
if infinite since 
\begin{equation}
\left(1+\tau\right)^{k_{1}}\left(x-y\right)+y=\left(1+\tau\right)^{k_{2}}\left(x-y\right)+y
\end{equation}
 if and only if $k_{1}=k_{2}$, therefore implying that $\mathfrak{A}$
is infinite dimensional and thus proving the theorem.
\end{proof}
As for the simplicity, or the non-simplicity, of the algebra $\mathfrak{J}\left(\Xi\right)$,
at the moment no model set $\Xi$ led to the construction of of a
simple aperiodic Jordan algebra $\mathfrak{J}\left(\Xi\right)$ (see
\cite{CCAI,CCAI-2}). In fact, this is no surprise, since simple infinite-dimensional
Jordan algebras are all classified by Zel'manov in \cite{Ze}. To
have a simple aperiodic Jordan algebra of the type in (\ref{eq:JordanProdDef}),
would mean to have a quasicrystal $\Xi$ arising from a convex window
$\Omega$ that would give rise to an algebra $\mathfrak{J}\left(\Xi\right)$
of the Clifford type or of the Hermitian type, since those are the
only allowed types from Zel'manov classification theorem. In both
cases that would be astonishing and if possible that would be the
result of a very special \emph{ad hoc} construction which would not
be relevant for our practical purposes. 

As a final note, even though the definition of an aperiodic Jordan
Algebra $\mathfrak{J}\left(\Xi\right)$ is always possible for a convex
cut-and-project icosahedral quasicrystal, the $H_{n}$-symmetry of
the quasicrystal, and thus of the Jordan algebra, is the result of
the joint properties of the lattice and the acceptance window $\Omega$.
In fact we have the following
\begin{thm}
\label{thm:Transparency}Let $\mathfrak{J}\left(\Xi\right)$ be an
aperiodic Jordan algebra, where the model set $\Xi$ the set-up defined
in (\ref{eq:cut-and-project-Icosians}) with a convex window $\Omega$.
Then $\mathfrak{J}\left(\Xi\right)$ has an $H_{1}\cong A_{1},H_{2},H_{3}$
or $H_{4}$ if and only if the acceptance window $\Omega$ enjoys
an $H_{1}\cong A_{1},H_{2},H_{3}$ or $H_{4}$ respectively.
\end{thm}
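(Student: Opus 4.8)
The plan is to establish the equivalence by relating the automorphism group of the Jordan algebra $\mathfrak{J}\left(\Xi\right)$ to the symmetry group of the quasicrystal $\Xi$, which in turn is governed by the symmetry of the acceptance window $\Omega$. First I would set up the two directions carefully. For the ``if'' direction, suppose $\Omega$ enjoys an $H_{n}$-symmetry; since the lattice $\widetilde{\mathbb{I}}\cong\mathbb{I}\times\mathbb{I}$ carries an $H_{4}$-symmetry (the icosian group acts on it by left or right quaternionic multiplication, preserving both the quaternionic and Euclidean norms, hence commuting appropriately with the star map and the projections $\pi_{\parallel},\pi_{\perp}$), any $H_{n}\leq H_{4}$ symmetry of $\Omega$ lifts to a map $g$ of the embedding space that preserves the lattice and sends the acceptance region to itself. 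Consequently $g$ restricts to a bijection $\Xi\to\Xi$ of the model set. The key point is then that this bijection is \emph{compatible with quasiaddition}: since $g$ acts linearly (it is an isometry built from elements of the icosian group, and quasiaddition $x\vdash y=\tau^{2}x-\tau y$ is defined by a $\mathbb{Z}[\tau]$-linear combination), we have $g(x\vdash y)=g(x)\vdash g(y)$. Therefore the induced linear map $\widehat{g}$ on $\mathfrak{J}\left(\Xi\right)$ defined by $\widehat{g}(L_{x})=L_{g(x)}$ respects the product in (\ref{eq:JordanProdDef}):
\begin{equation}
\widehat{g}(L_{x}\circ L_{y})=\tfrac{1}{2}\left(L_{g(x\vdash y)}+L_{g(y\vdash x)}\right)=\tfrac{1}{2}\left(L_{g(x)\vdash g(y)}+L_{g(y)\vdash g(x)}\right)=\widehat{g}(L_{x})\circ\widehat{g}(L_{y}),
\end{equation}
so $\widehat{g}$ is a Jordan algebra automorphism, and $g\mapsto\widehat{g}$ is an injective homomorphism from the $H_{n}$ realized on $\Omega$ into $\mathrm{Aut}(\mathfrak{J}(\Xi))$.

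For the ``only if'' direction, suppose $\mathfrak{J}\left(\Xi\right)$ carries an $H_{n}$-symmetry. The idempotents $L_{x}$ with $x\in\Xi$ are intrinsically distinguished: each $L_{x}$ is a primitive idempotent (its eigenspace decomposition under left multiplication is constrained because $\mathfrak{J}(\Xi)$ has no finite-dimensional subalgebras of dimension $>1$, by the theorem just proved, so $L_x$ generates only the line $\mathbb{R}L_x$ as a subalgebra), and in fact the set $\{L_{x}:x\in\Xi\}$ is, up to scaling, the set of these primitive idempotents generating one-dimensional subalgebras. Hence any Jordan automorphism permutes the basis $\{L_{x}\}$, inducing a permutation $\sigma$ of $\Xi$; the compatibility with $\circ$ forces $\sigma$ to respect quasiaddition, i.e. $\sigma(x\vdash y)=\sigma(x)\vdash\sigma(y)$. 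One then argues that such a $\sigma$ must be the restriction of an affine isometry of $\mathbb{R}^{4}$ — using the translational invariance (\ref{eq:transl-invariance}) and Lemma \ref{lem:repeatedM+n} to pin down how $\sigma$ acts along the ``quasiaddition orbits'' $\{(1+\tau)^{k}(x-y)+y\}$, which densely probe the affine structure — and that it preserves $\Xi$, hence (pulling back through the star map, using that $\pi_{2}(\Lambda)$ is dense) preserves the window $\Omega$ up to the relevant closure, giving the $H_{n}$-symmetry of $\Omega$.

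I would organize the proof so that the ``if'' direction is presented in full (it is the clean, constructive half) and the ``only if'' direction is reduced to the statement that quasiaddition-preserving bijections of $\Xi$ correspond to affine-isometric symmetries of $\Xi$, and thence to symmetries of $\Omega$. The main obstacle is precisely this last reduction: showing that an abstract Jordan automorphism cannot do anything ``wilder'' than come from a geometric symmetry. The subtle points are (i) confirming that permutations of the idempotent basis are the \emph{only} automorphisms — this needs the no-nontrivial-finite-subalgebra theorem together with an argument that the $L_x$ are the unique primitive idempotents generating $1$-dimensional subalgebras and are not interchangeable with any other idempotents such as finite sums; and (ii) upgrading a quasiaddition-preserving permutation of $\Xi$ to a $\mathbb{Z}[\tau]$-affine-linear map, for which the identities (\ref{eq:QAdd1})--(\ref{eq:QAdd4}), Corollary \ref{cor:x|-y=00003Dx}, and the translational invariance are the available tools. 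Once $\sigma$ is known to be affine-linear and $\Xi$-preserving, transferring to $\Omega$ via $x\mapsto x^{*}$ and density of $\pi_{\perp}(\widetilde{\mathbb{I}})$ is routine. I would also remark that the $H_{4}$-symmetry of the icosian lattice itself is what makes any window symmetry available at all — without it, even a highly symmetric $\Omega$ could fail to produce automorphisms because the ambient lattice would not be preserved.
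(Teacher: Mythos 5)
Your ``if'' direction coincides with the paper's proof, which is in fact all the paper proves: it takes a reflection $\rho$ of $H_{n}$, uses $\rho\left(\mathbb{I}\right)=\mathbb{I}$ together with $\rho\left(\Omega\right)=\Omega$ to conclude $\rho\left(\Xi\right)=\Xi$, and then uses linearity of $\rho$ to get $\rho\left(x\vdash y\right)=\rho\left(x\right)\vdash\rho\left(y\right)$, so that $L_{x}\mapsto L_{\rho\left(x\right)}$ is an automorphism of the product (\ref{eq:JordanProdDef}). That half of your argument is correct and is the same route; if anything you are more careful than the paper, since you explicitly require the lifted symmetry to be compatible with the star map and the two projections, a point the paper's proof passes over silently.

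The genuine gap is the ``only if'' direction. The paper states the theorem as an equivalence but its proof contains no argument for the converse, and your sketch of it does not close the matter either. Two steps are asserted rather than proved. First, that every Jordan automorphism permutes the basis $\left\{ L_{x}\right\} _{x\in\Xi}$: the absence of finite-dimensional subalgebras of dimension greater than $1$ rules out certain configurations, but it does not show that the $L_{x}$ are the \emph{only} idempotents spanning one-dimensional subalgebras --- an element $v=\sum\xi_{x}L_{x}$ with infinite support could a priori satisfy $v\circ v=v$, and nothing you cite gives an intrinsic, automorphism-invariant characterization of the $L_{x}$ among all idempotents. Second, that a quasiaddition-preserving bijection $\sigma$ of $\Xi$ must be the restriction of an affine isometry: Lemma \ref{lem:repeatedM+n} together with Theorem \ref{thm:Closure Quasiaddition} only constrains $\sigma$ along discrete geometric progressions $\left(1+\tau\right)^{k}\left(y-x\right)+x$ lying on single rays, which is far from determining $\sigma$ as an affine map of $\mathbb{R}^{4}$; moreover the translational invariance (\ref{eq:transl-invariance}) is of limited use here because $\Xi$ itself is not translation-invariant. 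Even granting affinity, one would still need to show the induced map on the internal space is a linear isometry belonging to $H_{n}$ and preserves $\Omega$ (not merely its closure or interior). So the converse remains unproved in your proposal --- exactly as it does in the paper --- and you should either supply these missing arguments or weaken the statement to the one implication that is actually established.
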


\begin{proof}
Consider a quasicrystal resulting from the set-up defined in (\ref{eq:cut-and-project-Icosians})
such that
\begin{equation}
\Xi=\left\{ x\in\mathbb{I}:x^{*}\in\Omega\right\} .
\end{equation}
Now, let $\rho$ be a reflection of the Coxeter group of $H_{n}$
with $n\in\left\{ 1,2,3,4\right\} $. Since the icosians $\mathbb{I}$
enjoy the $H_{4}$ symmetry, then $\rho\left(\mathbb{I}\right)=\mathbb{I}$.
Thus, if we also have that the acceptance window is invariant through
this reflection, i.e. $\rho\left(\Omega\right)=\Omega$, then also
the quasicrystal is invariant, i.e. $\rho\left(\Xi\right)=\Xi$. Then,
from the definition of a reflection in (\ref{eq:definizione reflection})
follows that 
\begin{equation}
\rho\left(x\vdash y\right)=\rho\left(x\right)\vdash\rho\left(y\right),
\end{equation}
for every $x,y\in\Xi$. The Jordan algebra $\mathfrak{J}\left(\Xi\right)$
it is thus invariant under $\rho$ since canonically isomorphic to
the algebra spanned by $\left\{ L_{\rho\left(x\right)}\right\} _{\rho\left(x\right)\in\Xi}$
equipped with the bilinear product given by

\noun{
\begin{equation}
L_{\rho\left(x\right)}\circ L_{\rho\left(y\right)}=\frac{1}{2}\left(L_{\rho\left(x\right)\vdash\rho\left(y\right)}+L_{\rho\left(y\right)\vdash\rho\left(x\right)}\right),\label{eq:JordanProdDef-1}
\end{equation}
}for every $\rho\left(x\right),\rho\left(y\right)\in\Xi$.
\end{proof}

\section{\noun{A few examples}}

In the previous section we showed some common properties of aperiodic
Jordan Algebra $\mathfrak{J}\left(\Xi\right)$. In this section we
want to give a hint of what these algebras are. Thus, making use of
cut-and-project schemes defined in section 3 we define here four aperiodic
Jordan algebras related to four well-known quasicrystal: the palindromic
\emph{Fibonacci-chain} quasicrystal; a \emph{Penrose tiling}; a $\mathbb{Z}^{6}$-quasicrystal;
the \emph{Elser-Sloane} quasicrystal. It is worth noting that the
Fibonacci-chain quasicrystal enjoy a reflection symmetry, i.e. an
$H_{1}\cong A_{1}$ symmetry; the Penrose tiling a fivefold symmetry
which is a subgroup of $H_{2}$; the specific $\mathbb{Z}^{6}$-quasicrystal
enjoys an $H_{3}$-symmetry; while the Elser-Slone quasicrystal, and
thus the Jordan algebra defined over it, enjoys an $H_{4}$-symmetry. 

\subsection{Fibonacci-chain quasicrystal}

Let us start from the cut-and-project scheme in (\ref{eq:cut-and-project-Icosians})
and set the acceptance window $\Omega$ as $\left(0,1\right]$. Then,
since the star map $*$ leaves $1,\text{i},\text{j},\text{k}$ invariant,
then the resulting model set 
\begin{equation}
\Xi=\left\{ x\in\mathbb{I}:x^{*}\in\Omega\right\} ,\label{eq:PalindromFCModSet}
\end{equation}
 is equivalent to 
\begin{equation}
\Xi\cong\mathcal{F}=\left\{ x\in\mathbb{Z}\left[\tau\right]:x^{*}\in\left(0,1\right]\right\} ,
\end{equation}
 which is the definition of the Fibonacci-chain quasicrystal as model
set (e.g. see \cite{Patera98,CCAI}). While the acceptance window
$\Omega=\left(0,1\right]$ corresponds to the most known Fibonacci-chain
quasicrystal, we are interested here on its symmetric version, i.e.
the palindromic Fibonacci-chain quasicrystal $\mathcal{F}_{\text{pal}}$
which can be obtained setting $\Omega=\left[-\frac{1}{2},\frac{1}{2}\right]$.
A sample of element of the palindromic Fibonacci-chain quasicrystal
can be found in Tab. \ref{tab:Elements-of-the}. 
\begin{table}
\centering{}%
\begin{tabular}{|c|c|c|c|c|c|c|c|c|c|c|c|c|}
\hline 
$n$ &  & $\cdots$ & -4 & -3 & -2 & -1 & 0 & 1 & 2 & 3 & 4 & $\cdots$\tabularnewline
\hline 
$\mathcal{F}_{\text{pal}}\left(n\right)$ &  & $\cdots$ & {\small{}$-2-4\tau$} & {\small{}$-2-3\tau$} & {\small{}$-1-2\tau$} & {\small{}$-1-\tau$} & {\small{}0} & {\small{}$1+\tau$} & {\small{}$1+2\tau$} & {\small{}$2+3\tau$} & {\small{}$2+4\tau$} & $\cdots$\tabularnewline
\hline 
\end{tabular}\caption{\label{tab:Elements-of-the}Elements of the Fibonacci-chain Quasicrystal
$\mathcal{F}_{\text{pal}}\left(n\right)$.}
\end{table}
 Since both the icosians $\mathbb{I}$ and the acceptance window $\Omega=\left[-\frac{1}{2},\frac{1}{2}\right]$
enjoy the 180-degree symmetry given by $\rho\left(x\right)=-x$, then also
the quasicrystal enjoys such symmetry, i.e. $\rho\left(\Xi\right)=\Xi$,
and more specifically, the palindromic Fibonacci-chain quasicrystal
is such that

\begin{equation}
\mathcal{F}_{\text{pal}}\left(-n\right)=-\mathcal{F}_{\text{pal}}\left(n\right).
\end{equation}

The Jordan algebra $\mathfrak{J}\left(\mathcal{F}_{\text{pal}}\right)$,
is defined as the real vector space spanned by $\left\{ L_{x}\right\} _{x\in\mathcal{F}_{\text{pal}}}$
equipped with the bilinear product given by

\begin{equation}
L_{x}\circ L_{y}=\frac{1}{2}\left(L_{x\vdash y}+L_{y\vdash x}\right).\label{eq:definizione Jordan}
\end{equation}
An equivalent definition of the Fibonacci-chain Jordan algebra $\mathfrak{J}\left(\mathcal{F}_{\text{pal}}\right)$
can be obtained on a vector space indexed by integral numbers, i.e.
spanned by $\left\{ L_{n}\right\} _{n\in\mathbb{Z}}$, with bilinear
product defined as
\begin{equation}
L_{n}\circ L_{m}=\frac{1}{2}\left(L_{n'-m'+2n-m}+L_{m'-n'+2m-n}\right),\label{eq:Jordan product nm}
\end{equation}
where $n',m'\in\mathbb{Z}$ and $n'=\mathcal{F}_{\text{pal}}\left(n\right)-\tau n$
and $m'=\mathcal{F}_{\text{pal}}\left(m\right)-\tau m$.
\begin{table}
\begin{centering}
{\small{}}%
\begin{tabular}{|c|c|c|c|c|c|c|}
\hline 
{\small{}$L_{n}\circ L_{m}$} &  & {\small{}$L_{-2}$} & {\small{}$L_{-1}$} & {\small{}$L_{0}$} & {\small{}$L_{1}$} & {\small{}$L_{2}$}\tabularnewline
\hline 
\hline 
{\small{}$L_{-4}$} &  & {\small{}$\frac{1}{2}\left(L_{1}+L_{-7}\right)$} & {\small{}$\frac{1}{2}\left(L_{-8}+L_{3}\right)$} & {\small{}$\frac{1}{2}\left(L_{6}+L_{-10}\right)$} & {\small{}$\frac{1}{2}\left(L_{9}+L_{-12}\right)$} & {\small{}$\frac{1}{2}\left(L_{11}+L_{-13}\right)$}\tabularnewline
\hline 
{\small{}$L_{-3}$} &  & {\small{}$\frac{1}{2}\left(L_{0}+L_{-5}\right)$} & {\small{}$\frac{1}{2}\left(L_{-6}+L_{2}\right)$} & {\small{}$\frac{1}{2}\left(L_{5}+L_{-8}\right)$} & {\small{}$\frac{1}{2}\left(L_{8}+L_{-10}\right)$} & {\small{}$\frac{1}{2}\left(L_{10}+L_{-11}\right)$}\tabularnewline
\hline 
{\small{}$L_{-2}$} &  & {\small{}$L_{-2}$} & {\small{}$\frac{1}{2}\left(L_{-3}+L_{0}\right)$} & {\small{}$\frac{1}{2}\left(L_{3}+L_{-5}\right)$} & {\small{}$\frac{1}{2}\left(L_{6}+L_{-7}\right)$} & {\small{}$\frac{1}{2}\left(L_{8}+L_{-10}\right)$}\tabularnewline
\hline 
{\small{}$L_{-1}$} &  & {\small{}$\frac{1}{2}\left(L_{-3}+L_{0}\right)$} & {\small{}$L_{-1}$} & {\small{}$\frac{1}{2}\left(L_{2}+L_{-3}\right)$} & {\small{}$\frac{1}{2}\left(L_{5}+L_{-5}\right)$} & {\small{}$\frac{1}{2}\left(L_{7}+L_{-6}\right)$}\tabularnewline
\hline 
{\small{}$L_{0}$} &  & {\small{}$\frac{1}{2}\left(L_{-5}+L_{3}\right)$} & {\small{}$\frac{1}{2}\left(L_{2}+L_{-3}\right)$} & {\small{}$L_{0}$} & {\small{}$\frac{1}{2}\left(L_{3}+L_{-2}\right)$} & {\small{}$\frac{1}{2}\left(L_{5}+L_{-3}\right)$}\tabularnewline
\hline 
{\small{}$L_{1}$} &  & {\small{}$\frac{1}{2}\left(L_{-7}+L_{6}\right)$} & {\small{}$\frac{1}{2}\left(L_{5}+L_{-5}\right)$} & {\small{}$\frac{1}{2}\left(L_{-2}+L_{3}\right)$} & {\small{}$L_{1}$} & {\small{}$\frac{1}{2}\left(L_{3}+L_{0}\right)$}\tabularnewline
\hline 
{\small{}$L_{2}$} &  & {\small{}$\frac{1}{2}\left(L_{-8}+L_{8}\right)$} & {\small{}$\frac{1}{2}\left(L_{-6}+L_{7}\right)$} & {\small{}$\frac{1}{2}\left(L_{-3}+L_{5}\right)$} & {\small{}$\frac{1}{2}\left(L_{0}+L_{3}\right)$} & {\small{}$L_{2}$}\tabularnewline
\hline 
{\small{}$L_{3}$} &  & {\small{}$\frac{1}{2}\left(L_{-10}+L_{11}\right)$} & {\small{}$\frac{1}{2}\left(L_{-8}+L_{10}\right)$} & {\small{}$\frac{1}{2}\left(L_{-5}+L_{8}\right)$} & {\small{}$\frac{1}{2}\left(L_{-2}+L_{6}\right)$} & {\small{}$\frac{1}{2}\left(L_{0}+L_{5}\right)$}\tabularnewline
\hline 
{\small{}$L_{4}$} &  & {\small{}$\frac{1}{2}\left(L_{-11}+L_{13}\right)$} & {\small{}$\frac{1}{2}\left(L_{-9}+L_{12}\right)$} & {\small{}$\frac{1}{2}\left(L_{-6}+L_{10}\right)$} & {\small{}$\frac{1}{2}\left(L_{-3}+L_{8}\right)$} & {\small{}$\frac{1}{2}\left(L_{-1}+L_{7}\right)$}\tabularnewline
\hline 
\end{tabular}{\small\par}
\par\end{centering}
\centering{}\caption{\label{tab:Mult-table-F10-1}Multiplication table for a sample of
generators of the aperiodic Jordan algebra $\mathfrak{J}\left(\mathcal{F}_{\text{pal}}\right)$.}
\end{table}
 We will call algebra the integral formulation of the Jordan algebra
$\mathfrak{J}\left(\mathcal{F}_{\text{pal}}\right)$. An analysis
of Tab. \ref{tab:Mult-table-F10-1}, along with few computations allows
to see that $\mathfrak{J}\left(\mathcal{F}_{\text{pal}}\right)$ is
not simple since $L_{0}\mathfrak{J}\left(\mathcal{F}_{\text{pal}}\right)=\left\{ L_{0}\circ x:x\in\mathfrak{J}\left(\mathcal{F}_{\text{pal}}\right)\right\} $
is a proper ideal of the algebra. Similar results holds for all the
examples presented and, as already discussed before, we doubt the
existence of a simple Jordan algebra from this set-up.

\subsection{A Penrose tiling}

Penrose tilings are aperiodic tilings of the plane, introduced by
Roger Penrose in \cite{Pen}, making use of four different prototiles:
two different rhombi and two different tiles called kites and darts.
In this context, different cut-and-project schemes were proposed by
Baake \cite{BaHu,Baake Quasicrystal} and recently refined \cite{ShuMa},
to give rise to different Penrose tilings. For our purposes it is
interesting the specific tiling presented in \cite{MT03} that is
a special case of the general setting proposed by Baake. Indeed, the
vertices of such tilings are given by the aperiodic set 
\begin{equation}
\Xi=\left\{ z\in\mathbb{Z}\left[\tau\right]\triangle\subset\mathbb{C}:z^{*}\in\Omega\right\} ,\label{eq:ModelPenrose}
\end{equation}
where the acceptance window $\Omega$ is the convex pentagon with
vertices $\left\{ 1,\xi,\xi^{2},\xi^{3},\xi^{4}\right\} $, with $\xi=\text{exp}\left(2\pi\text{i}/5\right)$,
and $\triangle$ is the root system of type $H_{2}$ given by 
\begin{equation}
\triangle=\left\{ \pm1,\pm\xi^{2},\pm\left(1+\tau\xi^{2}\right),\pm\left(\tau+\xi^{2}\right),\pm\left(\tau+\tau\xi^{2}\right)\right\} .
\end{equation}
The aperiodic set $\Xi$ is a model set of the following cut-and-project
scheme
\begin{equation}
\begin{array}{ccccc}
\Xi &  & \Lambda &  & \Omega\\
\cap &  & \cap &  & \cap\\
\mathbb{C} & \overset{\pi_{1}}{\longleftarrow} & \mathbb{C}\times\mathbb{C} & \overset{\pi_{2}}{\longrightarrow} & \mathbb{C}
\end{array}\label{eq:cps-Penrose}
\end{equation}
where the lattice $\Lambda$ is given by $\left(\mathbb{Z}+\text{i}\mathbb{Z}\right)\times\left(\mathbb{Z}+\text{i}\mathbb{Z}\right)$
and 
\begin{align}
\pi_{1}\left(x,y,z,w\right) & =x+y\tau+\left(z+w\tau\right)\xi^{2},\\
\pi_{2}\left(x,y,z,w\right) & =x+y\left(1-\tau\right)+\left(z+w\left(1-\tau\right)\right)\xi^{4},
\end{align}
 so that the star-map $*$ sends 
\begin{equation}
\begin{cases}
\begin{array}{cc}
\xi^{*}\longrightarrow & \xi^{2},\\
\tau^{*}\longrightarrow & 1-\tau.
\end{array}\end{cases}
\end{equation}
Such cut-and-project scheme allows the labeling of elements of the
quasicrystal $\Xi$ through the use of quadruples $\left(a,b,c,d\right)$
of $\mathbb{Z}^{4}$, i.e. 
\begin{equation}
\left(a,b,c,d\right)\longrightarrow z=a+b\tau+\left(c+d\tau\right)\xi^{2},\label{eq:decomp a,b,c,d}
\end{equation}
where $a,b,c,d\in\mathbb{Z}$ which are also called the \emph{integer
coordinates} of $z$. A sample of the quasicrystal in its integer
coordinates is given in Tab. \ref{tab:A-sample-of}. 
\begin{table}
\centering{}{\footnotesize{}}%
\begin{tabular}{|c|c|c|c|c|c|c|c|}
\hline 
{\footnotesize{}$\Xi$} & {\footnotesize{}$z_{2}\left(n_{2}\right)$} & {\footnotesize{}$-3$} & {\footnotesize{}$-2$} & {\footnotesize{}$-1$} & {\footnotesize{}$0$} & {\footnotesize{}$1$} & {\footnotesize{}$2$}\tabularnewline
\hline 
\hline 
{\footnotesize{}$z_{1}\left(n_{1}\right)$} &  & {\footnotesize{}$\vdots$} & {\footnotesize{}$\vdots$} & {\footnotesize{}$\vdots$} & {\footnotesize{}$\vdots$} & {\footnotesize{}$\vdots$} & {\footnotesize{}$\vdots$}\tabularnewline
\hline 
{\footnotesize{}$-5$} & {\footnotesize{}$\ldots$} & {\footnotesize{}$\left(-2,-2,-2-4\right)$} & {\footnotesize{}$\left(-2,-2,-1,-2\right)$} & {\footnotesize{}$\left(-2,-2,0,1\right)$} & {\footnotesize{}$\left(-2,-2,0,0\right)$} & {\footnotesize{}$\left(-2,-2,1,1\right)$} & {\footnotesize{}$\left(-2,-2,2,3\right)$}\tabularnewline
\hline 
{\footnotesize{}$-4$} & {\footnotesize{}$\ldots$} & {\footnotesize{}$\left(-1,-3,-4,-6\right)$} & {\footnotesize{}$\left(-1,-3,-3,-5\right)$} & {\footnotesize{}$\left(-1,-3,-2,-3\right)$} & {\footnotesize{}$\left(-1,-3,0,0\right)$} & {\footnotesize{}$\left(-1,-3,1,2\right)$} & {\footnotesize{}$\left(-1,-3,2,3\right)$}\tabularnewline
\hline 
{\footnotesize{}$-3$} & {\footnotesize{}$\ldots$} & {\footnotesize{}$\left(-1,-2,-1,-2\right)$} & {\footnotesize{}$\left(-1,-2,-1,-1\right)$} & {\footnotesize{}$\left(-1,-2,0,-1\right)$} & {\footnotesize{}$\left(-1,-2,0,0\right)$} & {\footnotesize{}$\left(-1,-2,0,1\right)$} & {\footnotesize{}$\left(-1,-2,1,1\right)$}\tabularnewline
\hline 
{\footnotesize{}$-2$} & {\footnotesize{}$\ldots$} & {\footnotesize{}$\left(-1,-1,-1,-2\right)$} & {\footnotesize{}$\left(-1,-1,-1,-1\right)$} & {\footnotesize{}$\left(-1,-1,0,-1\right)$} & {\footnotesize{}$\left(-1,-1,0,0\right)$} & {\footnotesize{}$\left(-1,-1,0,1\right)$} & {\footnotesize{}$\left(-1,-1,1,1\right)$}\tabularnewline
\hline 
{\footnotesize{}$-1$} & {\footnotesize{}$\ldots$} & {\footnotesize{}$\left(0,-1,-2,-2\right)$} & {\footnotesize{}$\left(0,-1,-1,-2\right)$} & {\footnotesize{}$\left(0,-1,-1,-1\right)$} & {\footnotesize{}$\left(0,-1,0,0\right)$} & {\footnotesize{}$\left(0,-1,0,1\right)$} & {\footnotesize{}$\left(0,-1,1,1\right)$}\tabularnewline
\hline 
{\footnotesize{}$0$} & {\footnotesize{}$\ldots$} & {\footnotesize{}$\left(0,0,-1,-2\right)$} & {\footnotesize{}$\left(0,0,-1,-1\right)$} & {\footnotesize{}$\left(0,0,0,-1\right)$} & {\footnotesize{}$0$} & {\footnotesize{}$\left(0,0,0,1\right)$} & {\footnotesize{}$\left(0,0,1,0\right)$}\tabularnewline
\hline 
{\footnotesize{}$1$} & {\footnotesize{}$\ldots$} & {\footnotesize{}$\left(0,1,-1,-2\right)$} & {\footnotesize{}$\left(0,1,-1,-1\right)$} & {\footnotesize{}$\left(0,1,0,-1\right)$} & {\footnotesize{}$\left(0,1,0,0\right)$} & {\footnotesize{}$\left(0,1,1,1\right)$} & {\footnotesize{}$\left(0,1,1,2\right)$}\tabularnewline
\hline 
{\footnotesize{}2} & {\footnotesize{}$\ldots$} & {\footnotesize{}$\left(1,1,-1,-2\right)$} & {\footnotesize{}$\left(1,1,-1,-1\right)$} & {\footnotesize{}$\left(1,1,0,-1\right)$} & {\footnotesize{}$\left(1,1,0,0\right)$} & {\footnotesize{}$\left(1,1,0,1\right)$} & {\footnotesize{}$\left(1,1,1,1\right)$}\tabularnewline
\hline 
{\footnotesize{}3} & {\footnotesize{}$\ldots$} & {\footnotesize{}$\left(1,2,-1,-2\right)$} & {\footnotesize{}$\left(1,2,-1,-1\right)$} & {\footnotesize{}$\left(1,2,0,-1\right)$} & {\footnotesize{}$\left(1,2,0,0\right)$} & {\footnotesize{}$\left(1,2,0,1\right)$} & {\footnotesize{}$\left(1,2,1,1\right)$}\tabularnewline
\hline 
{\footnotesize{}4} & {\footnotesize{}$\ldots$} & {\footnotesize{}$\left(1,3,-4,-7\right)$} & {\footnotesize{}$\left(1,3,-2,-4\right)$} & {\footnotesize{}$\left(1,3,-1,-2\right)$} & {\footnotesize{}$\left(1,3,0,-1\right)$} & {\footnotesize{}$\left(1,3,1,1\right)$} & {\footnotesize{}$\left(1,3,3,4\right)$}\tabularnewline
\hline 
{\footnotesize{}$5$} & {\footnotesize{}$\ldots$} & {\footnotesize{}$\left(2,2,-2,-3\right)$} & {\footnotesize{}$\left(2,2,-1,-2\right)$} & {\footnotesize{}$\left(2,2,-1,-1\right)$} & {\footnotesize{}$\left(2,2,0,0\right)$} & {\footnotesize{}$\left(2,2,1,2\right)$} & {\footnotesize{}$\left(2,2,2,3\right)$}\tabularnewline
\hline 
\end{tabular}\caption{\label{tab:A-sample-of}A sample of elements $z$ in the quasicrystal.
Every element $z$ is given by $z\left(n,m\right)=z_{1}\left(n\right)+z_{2}\left(m\right)\xi^{2}$,
with $z_{1}\left(n\right),z_{2}\left(m\right)\in\mathbb{Z}\left[\tau\right]$
. Moreover, we intended $\left(a,b,c,d\right)\in\mathbb{Z}^{4}$ as
a short notation that corresponds to $z=a+b\tau+\left(c+d\tau\right)\xi^{2}$.}
\end{table}

The aperiodic Jordan algebra $\mathfrak{J}\left(\Xi\right)$ is defined
to be the real vector space spanned by $\left\{ L_{z}\right\} _{z\in\Xi}$
equipped with the bilinear product given by

\noun{
\begin{equation}
L_{z}\circ L_{w}=\frac{1}{2}\left(L_{z\vdash w}+L_{w\vdash z}\right),\label{eq:JordanPenrose}
\end{equation}
 }for every $z,w\in\Xi$. An integral formulation of the same algebra,
i.e. an isomorphic algebra with generators $\left\{ L_{\left(a,b,c,d\right)}\right\} _{\left(a,b,c,d\right)\in\mathbb{Z}^{4}}$,
can be obtained from (\ref{lem:Quasiaddition Icosians}) as

\begin{align}
L_{\left(a,b,c,d\right)}\circ L_{\left(a',b',c',d'\right)}= & \frac{1}{2}\left(L_{\left(a'+\left(a-a'\right)+\left(b-b'\right),b+\left(a-a'\right)+\left(b-b'\right),c'+\left(c-c'\right)+\left(d-d'\right),d+\left(c-c'\right)+\left(d-d'\right)\right)}+\right.\\
 & \left.+L_{\left(a+\left(a-a'\right)+\left(b-b'\right),b'+\left(a-a'\right)+\left(b-b'\right),c+\left(c-c'\right)+\left(d-d'\right),d'+\left(c-c'\right)+\left(d-d'\right)\right)}\right).
\end{align}

\subsection{A $\mathbb{Z}^{6}$\emph{-quasicrystal}}

Three-dimensional icosahedral quasicrystals can be obtained by projection
from three different lattices in a six-dimensional embedding space
$\mathbb{R}^{6}$. These lattices are the primitive cubic lattice
$P$, i.e. $\mathbb{Z}^{6}$ lattice, the face-centred cubic lattice
$2F$, i.e. the root lattice $D_{6}$, and the body-centred cubic
lattice $I$ (reciprocal to $2F$), i.e. the weight lattice $D_{6}$,
respectively \cite{PKK97}. Here we consider a cut and project scheme
analogous to that of the $\mathbb{Z}^{6}$\emph{-}quasicrystal obtained
in \cite{KD86}, where the embedding space is $\mathbb{R}^{6}$ and
the lattice is given by the integral lattice $\mathbb{Z}^{6}$, i.e. 

\begin{equation}
\begin{array}{ccccc}
\Xi &  & \mathbb{Z}^{6} &  & \Omega_{3}\\
\cap &  & \cap &  & \cap\\
\mathbb{R}^{3} & \overset{\pi_{1}}{\longleftarrow} & \mathbb{R}^{3}\times\mathbb{R}^{3} & \overset{\pi_{2}}{\longrightarrow} & \mathbb{R}^{3}
\end{array}\label{eq:cps-Z6}
\end{equation}
 where the maps $\pi_{1}$ and $\pi_{2}$ are given by 
\begin{equation}
\pi_{1}=\left(\begin{array}{cccccc}
1 & \tau & 0 & 0 & 0 & 0\\
0 & 0 & 1 & \tau & 0 & 0\\
0 & 0 & 0 & 0 & 1 & \tau
\end{array}\right),\pi_{2}=\left(\begin{array}{cccccc}
1 & \left(1-\tau\right) & 0 & 0 & 0 & 0\\
0 & 0 & 1 & \left(1-\tau\right) & 0 & 0\\
0 & 0 & 0 & 0 & 1 & \left(1-\tau\right)
\end{array}\right),
\end{equation}
 and the acceptance window $\Omega_{3}$ is given by the rhombic triacontahedron,
i.e. the convex hull obtained from the following $32$ vertices

\begin{equation}
\begin{array}{ccccc}
\left(0,\pm1,\pm\tau\right), & \,\,\, & \left(\pm\tau,0,\pm1\right), & \,\,\, & \left(\pm1,\pm\tau,0\right),\\
 &  & \left(\pm1,\pm1,\pm1\right),\\
\left(0,\pm\tau,\pm\tau^{-1}\right), & \,\,\, & \left(\pm\tau^{-1},0,\pm\tau\right), & \,\,\, & \left(\pm\tau,\pm\tau^{-1},0\right).
\end{array}
\end{equation}
We thus have that the $\mathbb{Z}^{6}$-quasicrystal is given by the
model set 
\begin{equation}
\Xi=\left\{ \pi_{1}\left(x\right)\in\mathbb{R}^{3}:x\in\mathbb{Z}^{6},\pi_{2}\left(x\right)\in\Omega_{3}\right\} .
\end{equation}
An equivalent definition of $\Xi$ can be achieved in a straightforward
way through the aid of an icosian cut and project set-up as in (\ref{eq:cut-and-project-Icosians}),
considering the embedding of $\mathbb{R}^{3}$ in the pure immaginary
quaternions $\text{Pu}\left[\mathbb{H}\right]$ given by 
\begin{equation}
\mathbb{R}^{3}\ni\left(x,y,z\right)\longrightarrow x\text{i}+y\text{j}+z\text{k}\in\text{Pu}\left[\mathbb{H}\right].
\end{equation}
We thus obtain the icosian formulation of the model set of $\Xi$
as
\begin{equation}
\Xi=\left\{ x\in\text{Pu}\left[\mathbb{I}\right]:x^{*}\in\Omega_{3}\right\} ,
\end{equation}
where the star-map is the usual one defined in (\ref{eq:starmap}). 

\subsection{The Elser-Sloane quasicrystal }

The Elser-Sloane quasicrystal $\mathscr{C}$ is a famous quasicrystal
with an $H_{4}$ symmetry that was introduced by Elser and Sloane
in \cite{Elser Sloane} obtained as a projection of the eight dimensional
lattice $\Lambda\left(E_{8}\right)$. In order to follow Elser and
Sloane construction in \cite{Elser Sloane}, we use for this quasicrystal
the cut-and-project scheme over the $E_{8}$ lattice as in (\ref{eq:E8 c-a-p scheme}),
which is equivalent to that in (\ref{eq:cut-and-project-Icosians})
over the icosians lattice $\mathbb{\widetilde{I}}$. Therefore, let
$c^{*}=1/\sqrt{\left(4+2\tau\right)}$ and $\kappa=\tau c^{*}$. Moreover,
let the acceptance window $\Omega_{ES}$ be the convex hull of the
following 720 vertices, i.e.
\begin{equation}
\begin{array}{ccccc}
\frac{1}{2}\kappa\left(\pm2,0,0,0\right), & \,\,\, & \frac{1}{2}\kappa\left(\pm1,\pm1,\pm1,\pm1\right), & \,\,\, & \frac{1}{2}\kappa\left(0,\pm1,\pm\tau,\pm\tau^{-1}\right),\\
\frac{1}{3}\kappa\left(\pm\tau^{2},\pm\tau^{-2},\pm1,0\right), & \,\,\, & \frac{1}{3}\kappa\left(\pm\tau^{2},\pm\tau^{-1},\pm\tau^{-1},\pm\tau^{-1}\right), & \,\,\, & \frac{1}{2}\kappa\left(0,\pm1,\pm\tau,\pm\frac{1}{\tau}\right),\\
\frac{1}{3}\kappa\left(\pm\left(2\tau-1\right),\pm\tau^{-1},\pm\tau,0\right), & \,\,\, & \frac{1}{3}\kappa\left(\pm\left(2\tau-1\right),\pm1,\pm1,\pm1\right), & \,\,\, & \frac{1}{3}\kappa\left(\pm\tau,\pm\tau,\pm\tau,\pm\tau^{-2}\right),\\
\frac{1}{3}\kappa\left(\pm2,\pm2,0,0\right), & \,\,\, & \frac{1}{3}\kappa\left(\pm2,\pm1,\pm\tau,\pm\tau^{-1}\right),
\end{array}
\end{equation}
considered with all choices of signs and any even permutation of the
coordinates. Then, the Elser-Sloane quasicrystal $\mathscr{C}$ is
the model set from the cut-and-project scheme in (\ref{eq:E8 c-a-p scheme})
given by
\begin{equation}
\Xi\cong\mathcal{\mathscr{C}}=\left\{ x\in\pi_{\parallel}\left(X\right):X\in\Lambda\left(E_{8}\right),\pi_{\perp}\left(X\right)\in\Omega_{ES}\right\} ,\label{eq:Elser-Sloane QC}
\end{equation}
for a complete treatment reference see \cite[sec. 4]{Elser Sloane}. 

Even though from an historical point of view the Elser-Sloane quasicrystal
was defined as (\ref{eq:Elser-Sloane QC}), an equally valid quaternionic
realisation of the quasicrystal that make use of the cut-and-project
scheme in (\ref{eq:cut-and-project-Icosians}) is 
\begin{equation}
\mathcal{\mathscr{C}}\cong\Xi=\left\{ x\in\mathbb{I}:x^{*}\in\iota\left(\Omega_{ES}\right)\right\} ,
\end{equation}
where the star map $*$ is that in (\ref{eq:starmap}) and $\iota\left(\Omega_{ES}\right)$
is the image of the acceptance window $\Omega_{ES}$ through the canonical
identification (\ref{eq:canonical R4 to H}) between $\mathbb{R}^{4}$
and $\mathbb{H}$. 

\section{\noun{conclusions and further developments}}

In this work we presented a Jordan structure naturally linked to a
class of icosahedral quasicrystal arising from a cut-and-project setup
with convex acceptance window. The resulting Jordan algebras are infinite
dimensional and fits into the class of aperiodic algebras introduced
by Patera et al. in \cite{Patera98}. Aperiodic Jordan algebras were
firstly defined by the authors in \cite{CCAI} for a Fibonacci chain
quasicrystal and later on extended to a Penrose tiling in \cite{CCAI-2},
but never fully studied in a general setting as this one. In this
work we showed some common features of such algebras, such as the
lack of the unit element (even though they can obviously be extended
to a unital Jordan algebra) and the lack of finite dimensional subalgebras
with dimension greater than $1$. We also showed that such algebras
enjoy a non-crystallographic symmetry when the acceptance window does.
As for the physical applications and future works, aperiodic algebras
found applications in integrable systems \cite{TW00b} and in theoretical
frameworks \cite{Tw99a}. On the other hand, it is well known that
Jordan algebras are relevant in exactly solvable models involving
propagation of solitons. Indeed, it is well known a one-to-one correspondence
between finite dimensional Jordan algebra and multifield Korteweg-de
Vries equations\cite{Sv91}. Similar results hold for the modified
KdV equation \cite{Sv93}, for the Sine-Gordon equation and for generalisations
of the non-linear Schrodinger equation \cite{Sv92}. We thus expect
that propagation of certain kind of excitations in quasicrystal structures
might be related with the Jordan algebras presented here.

\section{Acknowledgment}

This work was funded by the Quantum Gravity Research institute. Authors
would like to thank Fang Fang, Marcelo Amaral, Dugan Hammock, and
Richard Clawson for insightful discussions and suggestions.

\end{document}